\numberwithin{equation}{section}
\newcommand{\set}[1]{\left\{#1\right\}}
\newtheorem{Theorem}{Theorem}[section]
\newtheorem{Proposition}[Theorem]{Proposition}
\newtheorem{cor}[Theorem]{Corollary}
\theoremstyle{remark}
\begin{document}

\title{Approximate biprojectivity and $\phi$-biflatness of certain Banach algebras}

\author[A. Sahami]{A. Sahami}

\address{Faculty of Mathematics and Computer Science,
Amirkabir University of Technology, 424 Hafez Avenue, 15914
Tehran, Iran.}

\email{amir.sahami@aut.ac.ir}

\author[A. Pourabbas]{A. Pourabbas}
\email{arpabbas@aut.ac.ir}

\keywords{Beurling algebras, Segal algebras, Approximate
biprojectivity, $\phi$-biflat.}

\subjclass[2010]{Primary 43A07, 43A20, Secondary 46H05.}

%\maketitle
%-------------------------------------------------------------

%%%%%%%%%%%%%%%%%%%%%%%%%%%%%%%%%%%%%%%%%%%%%%%%%%%%%%%%%%%%%%%%%%%%%%%%%
\begin{abstract}
In the first part of the  paper, we investigate the approximate biprojectivity  of some Banach algebras related to the locally compact groups. We show that a Segal algebra $S(G)$ is approximately biprojective if and only if $G$ is compact. Also for every continuous weight $w$, we show that $L^{1}(G,w)$ is  approximately biprojective if and only if $G$ is compact.

In the second part, we study $\phi$-biflatness of some Banach algebras, where $\phi$ is a  Banach algebra character. We show that if $S(G)$ is $\phi$-biflat, then $G$ is an amenable group for every character $\phi$. Finally we show that $\phi$-biflatness of $L^{1}(G)^{**}$ implies the amenability of $G$.
\end{abstract}
\maketitle

\section{Introduction and Preliminaries}
The concepts of $\phi$-biflatness, $\phi$-biprojectivity,
$\phi$-Johnson amenability and other related concepts were
introduced and studied in \cite{sah1}. The studies include
determining when the various classes of Banach algebras are, or are
not $\phi$-biflat or $\phi$-biprojective. It was shown in
\cite{sah1} that $L^{1}(G)$ is $\phi$-biflat if and only if $G$ is
an amenable group and the Fourier algebra $\mathcal{A}(G)$ is
$\phi$-biprojective if and only if $G$ is a discrete group.

Recently the concepts of approximate biprojectivity and approximate biflatness have been studied
by  Zhang \cite{zhang} and   Samei {\it et al.} \cite{sam}, respectively.   Samei {\it et al.} in \cite{sam} studied
 approximate biflatness  of Segal algebras and Fourier
algebras and  they showed that the Segal algebra $S(G)$ is pseudo-contractible if and only if $G$ is compact. Note that the pseudo-contractility of Banach algebras implies the approximate
biprojectivity \cite[Proposition 3.8]{ghah pse}, that is, the
approximate biprojectivity is  a weaker notion than the pseudo-contractility, for more details see \cite{ghah pse}.

Motivated by these results, in this paper we  extend \cite[Theorem
3.5]{sam} or \cite[Theorem 5.3]{Choi} and we  show that  Segal
algebra $S(G)$ is approximately biprojective if and only if $G$ is
compact.  The group algebra $L^{1}(G)$ is biprojective if and only
if $G$ is compact, see \cite[Theorem 5.13]{hel}. Here we extend this
result, we show that the weighted group algebra $L^{1}(G,w)$ is
approximately biprojective if and only if $G$ is compact for every
continuous weight $w$ on $G$. We show that if Segal algebra $S(G)$ is
$\phi$-biflat, then $G$ is amenable, where $\phi$ is  any
  character on $S(G)$ and if $L^{1}(G)^{**}$ is
$\tilde{\phi}$-biflat, then $G$ is amenable, where $\tilde{\phi}$ is
an extension of character $\phi$ on $L^{1}(G)$.

We remark some standard notations and definitions that we shall need
in this paper. Let $A$ be a Banach algebra. If $X$ is a Banach
$A$-bimodule, then  $X^{*}$ is also a Banach $A$-bimodule via the
following actions
$$(a\cdot f)(x)=f(x\cdot a) ,\hspace{.25cm}(f\cdot a)(x)=f(a\cdot x ) \hspace{.5cm}(a\in A,x\in X,f\in X^{*}). $$

Throughout, the
character space of $A$ is denoted by $\Delta(A)$,  that is, all
non-zero multiplicative linear functionals on $A$. Let $\phi\in
\Delta(A)$. Then $\phi$ has a unique extension   $\tilde{\phi}\in\Delta(A^{**})$
 which is defined by $\tilde{\phi}(F)=F(\phi)$ for every
$F\in A^{**}$.

Let $A$ be a  Banach algebra. The projective tensor product
 $A\otimes_{p}A$ is a Banach $A$-bimodule via the following actions
$$a\cdot(b\otimes c)=ab\otimes c,~~~(b\otimes c)\cdot a=b\otimes
ca\hspace{.5cm}(a, b, c\in A).$$

 The product morphism
$\pi_{A}:A\otimes_{p}A\rightarrow A$ is  specified by $\pi_{A}(a\otimes
b)=ab$ for every $a,b\in A$.

Let $G$ be a locally compact group. The Fourier algebra on  $G$ is
denoted by $\mathcal{A}(G)$.  It is well-known that the character
space $\Delta(\mathcal{A}(G))$ consists of all point evaluation maps
$\phi_{t}:\mathcal{A}(G)\rightarrow \mathbb{C}$ such that
$\phi_{t}(f)=f(t)$ for each $f\in \mathcal{A}(G)$, see \cite{Eym}.

We also remind some concepts of Banach homology  which we shall need
in this paper. A Banach algebra $A$ is called biprojective, if there
exists a bounded $A$-bimodule morphism $\rho:A\rightarrow
A\otimes_{p}A$ such that $\rho$ is a right inverse for $\pi_{A}$
\cite{hel}. We recall that $A$ is an approximately biprojective
Banach algebra if there exists a net of bounded   $A$-bimodule
morphism $(\rho_{\alpha}):A\rightarrow A\otimes_{p}A$ such that
$\pi_{A}\circ\rho_{\alpha}(a)\rightarrow a$ for each $a\in A,$ see
\cite{zhang}. A Banach algebra  $A$ is called $\phi$-biflat for every $\phi\in\Delta(A)$,
if there exists a bounded $A$-bimodule morphism $\rho:A\rightarrow
(A\otimes_{p}A)^{**}$ such that
$\tilde{\phi}\circ\pi^{**}_{A}\circ\rho(a)=\phi(a)$
 for every $a\in A,$ \cite{sah1}.  Also $A$ is called left
$\phi$-amenable (left $\phi$-contractible) if there exists an element
$m\in A^{**}\,(m\in A)$ such that $am=\phi(a)m$ and
$\tilde{\phi}(m)=1\,(\phi(m)=1)$ for every $a\in A$, respectively.
For more details on left $\phi$-amenability and left
$\phi$-contractibility see \cite{kan} and \cite{nas}, respectively.

Following theorem is given by authors in \cite{sah3}. They
characterized  approximate biprojectivity of some semigroup
algebras. We apply this theorem in order to  characterize  approximate biprojectivity
of algebras related to the locally compact groups.
\begin{Theorem}\cite[Theorem 3.9]{sah3}\label{app give phi}
Let $A$ be an approximately biprojective Banach algebra with a left
approximate identity (right approximate identity) and let
$\phi\in\Delta(A)$. Then $A$ is left $\phi$-contractible(right
$\phi$-contractible), respectively.
\end{Theorem}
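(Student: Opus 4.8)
The plan is to extract, from a net of approximately-splitting bimodule morphisms together with a one-sided approximate identity, a single element of $A$ witnessing left $\phi$-contractibility. First I would fix $\phi\in\Delta(A)$ and, since $\phi$ is a nonzero homomorphism onto $\mathbb{C}$, fix $a_{0}\in A$ with $\phi(a_{0})=1$. Let $(\rho_{\alpha})$ be the net of bounded $A$-bimodule morphisms $A\to A\otimes_{p}A$ with $\pi_{A}\circ\rho_{\alpha}(a)\to a$ for each $a\in A$. The key observation is that $\phi\otimes\phi$ (more precisely, the functional $a\otimes b\mapsto\phi(a)\phi(b)$, extended to $A\otimes_{p}A$) is multiplicative in the relevant sense: for a bimodule morphism $\rho$ into $A\otimes_{p}A$, the scalar-valued map $a\mapsto(\phi\otimes\phi)(\rho_{\alpha}(a))$ behaves like a near-multiple of $\phi$. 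Concretely, I would set $m_{\alpha}:=(\mathrm{id}\otimes\phi)\circ\rho_{\alpha}(a_{0})\in A$, i.e. apply $\phi$ to the right tensor leg; this is a standard "slicing" of an element of $A\otimes_{p}A$ and yields a genuine element of $A$.

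Next I would compute $a\cdot m_{\alpha}$ for $a\in A$. Using that $\rho_{\alpha}$ is a left module morphism, $a\cdot\rho_{\alpha}(a_{0})=\rho_{\alpha}(a a_{0})$, so $a\cdot m_{\alpha}=(\mathrm{id}\otimes\phi)\circ\rho_{\alpha}(a a_{0})$. To relate this back to $\phi(a)m_{\alpha}$ I would exploit the right module property: $\rho_{\alpha}(a_{0})\cdot a_{0}=\rho_{\alpha}(a_{0}a_{0})$ is not immediately what I want, so instead the right approach is to push $\phi$ through both legs. Apply $\phi$ to the first leg as well: $\phi(a\cdot m_{\alpha})$... but more efficiently, note $(\phi\otimes\phi)(\rho_{\alpha}(a))=(\phi\otimes\phi)(\rho_{\alpha}(a)\cdot a_{0}')$-type manipulations show $(\phi\otimes\phi)\circ\rho_{\alpha}\to\phi$ pointwise, because $(\phi\otimes\phi)(x)=\phi(\pi_{A}(x))$ fails in general, so instead I would use $(\phi\otimes\phi)(\rho_{\alpha}(a))=\phi\big((\mathrm{id}\otimes\phi)\rho_{\alpha}(a)\big)$ and, writing $a=a a_{0}$ up to the limit, use left-module-ness to get $am_{\alpha}-\phi(a)m_{\alpha}=(\mathrm{id}\otimes\phi)\big(\rho_{\alpha}(aa_{0})-a\cdot\rho_{\alpha}(a_{0})\big)=0$ exactly when $aa_{0}=a$; since $(a_{0})$ need not be a right identity, I would instead take the net indexed jointly by $\alpha$ and by the given one-sided approximate identity $(e_{\beta})$, replace $a_{0}$ by $e_{\beta}$, and take iterated limits: $m_{\alpha,\beta}=(\mathrm{id}\otimes\phi)\rho_{\alpha}(e_{\beta})$ gives $a m_{\alpha,\beta}=(\mathrm{id}\otimes\phi)\rho_{\alpha}(a e_{\beta})$, which tends (in $\beta$) to $(\mathrm{id}\otimes\phi)\rho_{\alpha}(a)$, while $\phi(a)m_{\alpha,\beta}$ needs $\phi(e_\beta)\to 1$, true since $\phi(e_\beta a)=\phi(e_\beta)\phi(a)\to\phi(a)$.

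The normalization $\tilde\phi(m)=1$ (here $\phi(m)=1$ since we want $m\in A$) comes from $\phi(m_{\alpha,\beta})=(\phi\otimes\phi)\rho_{\alpha}(e_{\beta})$, and applying $\pi_{A}$: since $\pi_{A}\circ\rho_{\alpha}(e_{\beta})\to e_{\beta}$, one has $\phi(\pi_{A}\rho_{\alpha}(e_{\beta}))\to\phi(e_{\beta})\to1$; the point is that $(\phi\otimes\phi)(x)$ and $\phi(\pi_{A}(x))$ agree on elementary tensors ($\phi(a)\phi(b)=\phi(ab)$) hence on all of $A\otimes_{p}A$ by density and continuity, so $\phi(m_{\alpha,\beta})=\phi(\pi_{A}\rho_{\alpha}(e_{\beta}))\to1$. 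Passing to a subnet (or using a standard iterated-limit argument along the product directed set, with a Banach-limit/weak-$*$ compactness step if boundedness of $\rho_{\alpha}$ fails — though here we only claim an element of $A$, so I would first normalize to arrange $\phi(m_{\alpha,\beta})=1$ by scaling and then the net $(am-\phi(a)m)\to0$ genuinely, giving the required $m$ in the completion... ) I obtain the desired $m$. \textbf{The main obstacle} I anticipate is precisely this last passage to an honest element $m\in A$ rather than a net: approximate biprojectivity gives only convergence of $\pi_A\rho_\alpha(a)\to a$, not norm-boundedness of $(\rho_\alpha)$ or of $(m_{\alpha,\beta})$, so to land in $A$ (not $A^{**}$) one must argue that left $\phi$-contractibility follows from the \emph{approximate} version — i.e. first establish "approximate left $\phi$-contractibility" (a net $m_i$ with $am_i-\phi(a)m_i\to0$, $\phi(m_i)=1$) and then invoke the known fact that for a one-sided-approximate-identity algebra this self-improves to the existence of a genuine idempotent-type element, or alternatively show the net $(m_{\alpha,\beta})$ can be chosen bounded by a clever reindexing against the approximate identity. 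Handling that self-improvement cleanly is the crux of the argument.
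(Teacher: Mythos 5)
There is a genuine gap, and you have correctly located it yourself: your argument only produces a net $(m_{\alpha,\beta})$ with $am_{\alpha,\beta}-\phi(a)m_{\alpha,\beta}\to 0$ and $\phi(m_{\alpha,\beta})\to 1$, whereas left $\phi$-contractibility demands a single element $m\in A$ with $am=\phi(a)m$ \emph{exactly}. The repairs you sketch at the end do not work: a weak-$*$ cluster point argument (even granting boundedness, which approximate biprojectivity does not give you) lands in $A^{**}$ and would at best yield left $\phi$-amenability, not $\phi$-contractibility; and there is no general ``self-improvement'' from the approximate statement to the exact one. The missing idea is that the intertwining relation can be made exact for each \emph{fixed} $\alpha$. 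Set $\theta_{\alpha}=(\mathrm{id}_{A}\otimes\phi)\circ\rho_{\alpha}:A\to A$. From the left-module property of $\rho_{\alpha}$ you get $\theta_{\alpha}(ab)=a\,\theta_{\alpha}(b)$, and from the \emph{right}-module property, since $(\mathrm{id}_{A}\otimes\phi)(x\cdot b)=\phi(b)\,(\mathrm{id}_{A}\otimes\phi)(x)$, you get $\theta_{\alpha}(ab)=\phi(b)\,\theta_{\alpha}(a)$. Hence $\theta_{\alpha}$ kills $A\cdot\ker\phi$, and because $A$ has a left approximate identity, $\ker\phi=\overline{A\cdot\ker\phi}$, so $\theta_{\alpha}$ vanishes on all of $\ker\phi$. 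Writing $a=\phi(a)b_{0}+(a-\phi(a)b_{0})$ with $\phi(b_{0})=1$ and $m_{\alpha}:=\theta_{\alpha}(b_{0})$, this gives
\begin{equation*}
a m_{\alpha}=\theta_{\alpha}(ab_{0})=\theta_{\alpha}(a)=\phi(a)\,\theta_{\alpha}(b_{0})=\phi(a)m_{\alpha}
\end{equation*}
for every $a\in A$ and every $\alpha$ --- an exact identity, with no limit over $\alpha$ involved.

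After that, the normalization is the only place the approximate splitting is used: $\phi(m_{\alpha})=(\phi\otimes\phi)(\rho_{\alpha}(b_{0}))=\phi\bigl(\pi_{A}(\rho_{\alpha}(b_{0}))\bigr)\to\phi(b_{0})=1$ (you correctly observed that $\phi\otimes\phi=\phi\circ\pi_{A}$ on $A\otimes_{p}A$). So one simply chooses a single index $\alpha$ with $\phi(m_{\alpha})\neq 0$ and replaces $m_{\alpha}$ by $m_{\alpha}/\phi(m_{\alpha})$; this is a genuine element of $A$ satisfying both required identities, and no boundedness of $(\rho_{\alpha})$, no iterated limits, and no passage to $A^{**}$ are needed. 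Your slicing construction and the identity $\phi(m)=\phi\circ\pi_{A}\circ\rho_{\alpha}(\cdot)\to 1$ are the right ingredients, but without the observation that $\theta_{\alpha}$ annihilates $\ker\phi$ the argument does not close.
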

%------------------------------------------------------------------------------------------------------------------------------------------
%%%%%%%%%%%%%%%%%%%%%%%%%%%%%%%%%%%%%%%%%%%%%%%%%%%%%%%%%%%%%%%%%%%%%%%%%%%%%%%%%%%%%%%%%%%%%%%%%%%%%%%%%%%%%%%%%%%%%%%%%%%%%%%%%%%%%%%%%%%
%------------------------------------------------------------------------------------------------------------------------------------------
\section{Approximate biprojectivity}
In this section we improve \cite[Theorem 3.5]{sam} or \cite[Theorem
5.3]{Choi} and \cite[Theorem 5.13]{hel} concerning  approximate
biprojectivity of some Banach algebras related to the locally
compact groups.

We remind that a Banach algebra $A$ is called   pseudo-contractible
 if there is a not necessarily bounded net $(m_{\alpha})_{\alpha}$ in
$A\otimes_{p}A$ such that $a\cdot m_{\alpha}=m_{\alpha}\cdot a$ and
$\pi_{A}(m_{\alpha})a\rightarrow a$ for each $a\in A$. For the fundamental details of
 the pseudo-contractibility  readers are referred to
\cite{ghah pse} and \cite{Choi}.

Now we consider Segal algebras on   a locally compact group. As we see in \cite{rei} a Segal algebra
$S(G)$  on   a locally compact group $G$ is a dense left ideal of  $L^{1}(G)$ that
satisfies the following conditions:
\begin{enumerate}
\item [(i)]  $S(G)$  is
a Banach space  with respect to a norm $||\cdot||_{S}$ satisfying $|| \cdot||_{L^1}\leq|| \cdot||_{S}$.
\item [(ii)] For $f\in S(G)$ and $y\in G$, $\delta_{y}\ast f\in S(G)$
and the map $y\mapsto \delta_{y}\ast f$ is continuous. Also
$||\delta_{y}\ast f||_{S}=|| f||_{S}$, for $f\in S(G)$ and
$y\in G$.
\end{enumerate}
With the norm $||\cdot||_{S}$ and the convolution product, $S(G)$ is a Banach algebra and we have the following
inequality
$$||f\ast g||_S\leq ||f||_{L^1}||g||_S\qquad f\in L^{1}(G),\,g\in S(G). $$
$S(G)$  on   a locally compact group $G$ always has a left approximate identity and it is never amenable unless it is
 $ L^{1}(G)$ itself and $G$ is amenable.
A Segal algebra is called symmetric if for $f\in S(G)$ and $y\in G$, $f\ast\delta_{y}\in S(G)$ and the map $y\mapsto f\ast \delta_{y}$ is continuous and also $|| f\ast \delta_{y}||_{S}=|| f||_{S}$. By \cite[Proposition 1, page 19]{rei} a symmetric Segal algebra is an ideal in $L^{1}(G)$ which $$||g\ast f||_S\leq ||f||_{L^1}||g||_S\qquad f\in L^{1}(G),\,g\in S(G). $$

Note that  $\Delta(S(G))=\{\phi_{|_{S(G)}}|\phi\in\Delta(L^{1}(G))\}$
and $\phi_{0}$ (the augmentation character on $L^{1}(G)$)
induces a character on $S(G)$ still denoted by $\phi_{0}$
\cite[Lemma 2.2]{alagh}.

 Samei {\it et al.} in
\cite[Theorem 3.5]{sam} and Choi {\it et al.} in \cite[Theorem
5.3]{Choi} showed that $S(G)$ is pseudo-contractible if and only if
$G$ is compact.
As  approximate
biprojectivity is weaker notion  than pseudo-contractibility  in  the following theorem we extend this result.
\begin{Theorem}\label{segal}
Let $G$ be a locally compact group. Then $S(G)$ is approximately
biprojective if and only if $G$ is compact.
\end{Theorem}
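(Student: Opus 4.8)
The plan is to prove the two implications separately, with the easy direction first. If $G$ is compact, then $L^1(G)$ is biprojective by \cite[Theorem 5.13]{hel}, and in fact any Segal algebra on a compact group coincides with $L^1(G)$ (since $L^1(G)$ already has the relevant translation properties and is minimal among suitable spaces — more precisely, on a compact group the only Segal algebra is $L^1(G)$ itself because $L^1(G)$ has a two-sided bounded approximate identity and the continuity of translation holds), so $S(G) = L^1(G)$ is biprojective, hence trivially approximately biprojective. Actually, to be safe I would argue directly: on a compact group $S(G)=L^1(G)$, and biprojectivity implies approximate biprojectivity by taking the constant net $\rho_\alpha = \rho$.

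For the converse, suppose $S(G)$ is approximately biprojective. Since $S(G)$ always has a left approximate identity, Theorem~\ref{app give phi} applies: for every $\phi \in \Delta(S(G))$, $S(G)$ is left $\phi$-contractible. The natural choice is $\phi = \phi_0$, the augmentation character, which does induce a character on $S(G)$ by \cite[Lemma 2.2]{alagh}. So there exists $m \in S(G)$ with $f \cdot m = \phi_0(f) m$ for all $f \in S(G)$ and $\phi_0(m) = 1$. The key step is to push this element down to $L^1(G)$: since $S(G) \subseteq L^1(G)$ with $\|\cdot\|_{L^1} \le \|\cdot\|_S$ and $S(G)$ is a dense left ideal, the relation $f * m = \phi_0(f) m$ extends by density and continuity from $f \in S(G)$ to all $f \in L^1(G)$ (using $\|f*m\|_S \le \|f\|_{L^1}\|m\|_S$ to control things, or simply noting that $m \in S(G) \subseteq L^1(G)$ and the convolution identity in $L^1(G)$ follows by approximating $f \in L^1(G)$ by elements of the dense ideal $S(G)$). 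This makes $m$ a nonzero element of $L^1(G)$ satisfying $f * m = \phi_0(f) m$ with $\phi_0(m) = 1$, i.e. $L^1(G)$ is left $\phi_0$-contractible.

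It is a standard fact (see \cite{nas} or the theory of left $\phi$-contractibility) that $L^1(G)$ is left $\phi_0$-contractible if and only if $G$ is compact — indeed left $\phi_0$-contractibility of $L^1(G)$ forces the existence of a nonzero $m \in L^1(G)$ with $f * m = (\int f)\, m$, which upon unwinding forces $m$ to be (a scalar multiple of) a constant function being integrable, hence $G$ compact; alternatively, this is exactly \cite[Theorem 3.5]{sam}-type reasoning but at the level of $\phi_0$-contractibility rather than full pseudo-contractibility. I would cite the relevant characterization: $L^1(G)$ is left character contractible at $\phi_0$ iff $G$ is compact. Combining, $G$ is compact, which completes the proof.

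The main obstacle I anticipate is the density/extension step: verifying cleanly that left $\phi_0$-contractibility of $S(G)$ transfers to left $\phi_0$-contractibility (equivalently, here, to compactness) of $L^1(G)$. The subtlety is that $m$ lives in $S(G)$ and the contractibility identity is only known a priori for $f \in S(G)$; one must check the identity persists for $f \in L^1(G)$ using the module inequality $\|f * g\|_S \le \|f\|_{L^1}\|g\|_S$ and density of $S(G)$ in $L^1(G)$, and that $\phi_0$ on $S(G)$ is genuinely the restriction of the augmentation character, so that $\phi_0(m) = 1$ is meaningful in $L^1(G)$. Everything else is either a direct invocation of Theorem~\ref{app give phi} or a citation of the known equivalence between $\phi_0$-contractibility of $L^1(G)$ and compactness of $G$.
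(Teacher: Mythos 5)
Your ``only if'' direction is essentially the paper's own argument: approximate biprojectivity plus the left approximate identity of $S(G)$ gives left $\phi_{0}$-contractibility via Theorem~\ref{app give phi}, the resulting element $m\in S(G)$ satisfies $f\ast m=\phi_{0}(f)m$ for all $f\in L^{1}(G)$ by density (using $\norm{f\ast m}_{S}\le\norm{f}_{L^{1}}\norm{m}_{S}$), and then \cite[Theorem 6.1]{nas} forces $G$ to be compact. That part is sound.

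The ``if'' direction, however, contains a genuine error. You assert that on a compact group the only Segal algebra is $L^{1}(G)$ itself; this is false. For an infinite compact group $G$, the spaces $C(G)$ and $L^{p}(G)$ for $p>1$ (with convolution and their own norms) are proper dense left ideals of $L^{1}(G)$ satisfying all the Segal algebra axioms, so they are Segal algebras strictly contained in $L^{1}(G)$. (Indeed, a Segal algebra coincides with $L^{1}(G)$ precisely when it has a bounded approximate identity, which proper Segal algebras never do; compactness of $G$ does not rescue this.) Consequently you cannot reduce the converse to the biprojectivity of $L^{1}(G)$. The correct route, which is what the paper uses, is to invoke \cite[Theorem 3.5]{sam} or \cite[Theorem 5.3]{Choi}: for compact $G$ every Segal algebra $S(G)$ is pseudo-contractible, and pseudo-contractibility implies approximate biprojectivity by \cite[Proposition 3.8]{ghah pse}. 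With that substitution your proof is complete.
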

\begin{proof}
 Let $S(G)$
be approximately biprojective.  Since  $S(G)$ has a left approximate identity, Theorem \ref{app
give phi} shows that  $S(G)$ is left $\phi_{0}$-contractible, hence by
\cite[Theorem 2.1]{nas} there exists an element $m\in S(G)$ such
that $a\ast m=\phi_{0}(a)m$ and $ \phi_{0}(m)=1$ for every $a\in S(G)$.
Since $S(G)$ is dense in $L^{1}(G)$, it is easy to see that
$a\ast m=\phi_{0}(a)m$ and $ \phi_{0}(m)=1$ for every $a\in L^{1}(G)$.  Now apply
\cite[Theorem 6.1]{nas} to show that $G$ is compact.

Converse is clear by \cite[Theorem 3.5]{sam} or \cite[Theorem
5.3]{Choi}.
\end{proof}

\begin{Theorem}
Let $G$ be an $SIN$ group. If $S(G)\otimes_{p}S(G)$ is approximately
biprojective, then $G$ is compact.
\end{Theorem}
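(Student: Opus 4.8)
The plan is to reduce the approximate biprojectivity of the tensor algebra $S(G)\otimes_{p}S(G)$ to a statement about characters via Theorem~\ref{app give phi}, exactly as in the proof of Theorem~\ref{segal}, and then extract from that a $\phi_{0}$-type mean on $L^{1}(G)$ whose existence forces $G$ to be compact. First I would observe that, since $G$ is an $SIN$ group, $S(G)$ is a symmetric Segal algebra, so it is a two-sided ideal in $L^{1}(G)$ and, crucially, $S(G)$ has a two-sided (in particular a left) approximate identity: one takes a bounded approximate identity $(e_{\alpha})$ of $L^{1}(G)$ drawn from the neighbourhood basis of the identity, which approximates in $\|\cdot\|_{S}$ as well by property (ii) and symmetry. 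Consequently $S(G)\otimes_{p}S(G)$ also has a left approximate identity, namely $(e_{\alpha}\otimes e_{\beta})$ suitably netted (or $(e_{\alpha}\otimes e_{\alpha})$), since $(e_{\alpha}\otimes e_{\beta})\cdot(f\otimes g)=e_{\alpha}f\otimes ge_{\beta}\to f\otimes g$ and elementary tensors are total.

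Next I would produce a character on $S(G)\otimes_{p}S(G)$. The augmentation character $\phi_{0}$ on $S(G)$ is multiplicative, so $\phi_{0}\otimes\phi_{0}$ (i.e. the functional $f\otimes g\mapsto\phi_{0}(f)\phi_{0}(g)$, extended by continuity) lies in $\Delta(S(G)\otimes_{p}S(G))$. Applying Theorem~\ref{app give phi} to $A=S(G)\otimes_{p}S(G)$ with this character gives an element $M\in S(G)\otimes_{p}S(G)$ with $u\cdot M=(\phi_{0}\otimes\phi_{0})(u)\,M$ for all $u\in S(G)\otimes_{p}S(G)$ and $(\phi_{0}\otimes\phi_{0})(M)=1$. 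The idea now is to push $M$ down into $S(G)$. Let $m=\pi_{S(G)}(M)\in S(G)$, where $\pi_{S(G)}\colon S(G)\otimes_{p}S(G)\to S(G)$ is the product map. Since $\pi_{S(G)}$ is an $S(G)$-bimodule morphism and $\phi_{0}\circ\pi_{S(G)}=\phi_{0}\otimes\phi_{0}$, applying $\pi_{S(G)}$ to the left $(\phi_{0}\otimes\phi_{0})$-contractibility identity, after noting $a\cdot M$ corresponds to $(a\otimes e)\cdot M$-type actions, yields $a\ast m=\phi_{0}(a)\,m$ for all $a\in S(G)$ together with $\phi_{0}(m)=(\phi_{0}\otimes\phi_{0})(M)=1$. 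As in Theorem~\ref{segal}, density of $S(G)$ in $L^{1}(G)$ promotes this to $a\ast m=\phi_{0}(a)m$ for all $a\in L^{1}(G)$ with $\phi_{0}(m)=1$, i.e. $L^{1}(G)$ is left $\phi_{0}$-contractible; then \cite[Theorem 6.1]{nas} gives that $G$ is compact.

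The step I expect to be the main obstacle is the passage from the bimodule identity $u\cdot M=(\phi_{0}\otimes\phi_{0})(u)M$ on the tensor algebra to the usable identity $a\ast m=\phi_{0}(a)m$ on $S(G)$. The action of $S(G)\otimes_{p}S(G)$ on itself is the \emph{algebra} multiplication $(a\otimes b)\cdot(c\otimes d)=ac\otimes bd$, which is \emph{not} simply left multiplication by $a\in S(G)$ on the first leg; so one cannot directly substitute $u=a\otimes(\text{something})$ without controlling the second leg. The clean way around this is to use the left approximate identity: write the identity for $u=a\otimes e_{\alpha}$, apply $\pi_{S(G)}$ to get $a\ast(e_{\alpha}\cdot_{r}m)=\phi_{0}(a)\phi_{0}(e_{\alpha})(\cdots)$ — more carefully, set $m_{\alpha}=\pi_{S(G)}\big((1\otimes e_{\alpha})\cdot M\big)$ or use the fact that $\pi_{S(G)}(M\cdot(e_{\alpha}\otimes e_{\beta}))\to m$ — and pass to the limit using continuity of $\pi_{S(G)}$ and $\phi_{0}(e_{\alpha})\to 1$ (the last because $\phi_{0}$ is a character with $\phi_{0}(e_{\alpha}\ast f)=\phi_{0}(e_{\alpha})\phi_{0}(f)\to\phi_{0}(f)$). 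This limiting argument is routine once set up correctly but is the only genuinely nontrivial point; everything else parallels the proof of Theorem~\ref{segal} verbatim.
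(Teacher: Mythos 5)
Your strategy is sound and reaches the right conclusion, but it is genuinely different from the paper's. The paper does \emph{not} apply Theorem~\ref{app give phi} to the tensor square; instead it uses the actual content of the $SIN$ hypothesis, namely the Kotzmann--Rindler theorem that $S(G)$ then has a \emph{central} approximate identity $(e_{\alpha})$. From the net $(\rho_{\beta})$ witnessing approximate biprojectivity it forms $\rho_{\beta}(e_{\alpha}\otimes e_{\alpha})$, uses centrality to get elements commuting with all of $S(G)\otimes_{p}S(G)$, glues the two limits with the iterated limit theorem, deduces left $\phi\otimes\phi$-contractibility via the argument of \cite[Proposition 2.2]{sah1}, and then invokes \cite[Theorem 3.14]{nas} to descend to $S(G)$. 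Your route short-circuits all of this: once you know $S(G)\otimes_{p}S(G)$ has a left approximate identity, Theorem~\ref{app give phi} plus the descent to $S(G)$ finishes the proof, and --- notably --- the $SIN$ hypothesis is never used. That is a real simplification (and apparent strengthening) if the approximate identity step is right, which it is: a left approximate identity $(e_{\alpha})$ of $S(G)$ can be chosen with $\sup_{\alpha}\norm{e_{\alpha}}_{L^{1}}<\infty$, so the left multiplications by $e_{\alpha}\otimes e_{\alpha}$ are uniformly bounded on $S(G)\otimes_{p}S(G)$ and converge on elementary tensors.

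Two corrections, though. First, your formula $(e_{\alpha}\otimes e_{\beta})\cdot(f\otimes g)=e_{\alpha}f\otimes ge_{\beta}$ is the $S(G)$-bimodule action on $S(G)\otimes_{p}S(G)$, not the product in the algebra $S(G)\otimes_{p}S(G)$; the correct product is $e_{\alpha}f\otimes e_{\beta}g$. With the correct formula you only need a \emph{left} approximate identity in each leg, so your unsupported (and in general false) claim that $SIN$ forces $S(G)$ to be symmetric can simply be deleted. Second, the push-down via $\pi_{S(G)}$ is awkward because $\pi_{S(G)}$ is not multiplicative on the tensor algebra; rather than the limiting argument you sketch, use the slice map $P=\mathrm{id}\otimes\phi_{0}$, for which $P\bigl((a\otimes b)\cdot M\bigr)=\phi_{0}(b)\,a\ast P(M)$, so choosing $b$ with $\phi_{0}(b)=1$ gives $a\ast P(M)=\phi_{0}(a)P(M)$ and $\phi_{0}(P(M))=1$ at once --- or just cite \cite[Theorem 3.14]{nas} as the paper does.
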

\begin{proof}
The main result of  \cite{kot} asserts that, if $G$ is an $SIN$
group, then $S(G)$ has a central approximate identity, say
$(e_{\alpha})_{\alpha\in I}$.  Since $S(G)\otimes_{p}S(G)$ is
approximately biprojective, there exists a net
$$(\rho_{\beta})_{\beta\in \Theta}:S(G)\otimes_{p}S(G)\rightarrow
(S(G)\otimes_{p}S(G))\otimes_{p}(S(G)\otimes_{p}S(G))$$ of
continuous $S(G)\otimes_{p}S(G)$-bimodule morphism such that
$\pi_{S(G)\otimes_{p}S(G)}\circ\rho_{\beta}(x)\rightarrow x$, for
every $x\in S(G)\otimes_{p}S(G)$.
Consider
$n_{\alpha}=e_{\alpha}\otimes e_{\alpha}$, it is easy to see that  for
every $x\in S(G)\otimes_{p}S(G)$ we have
$xn_{\alpha}=n_{\alpha}x$ and $\phi\otimes
\phi(n_{\alpha})=\phi\otimes\phi(e_{\alpha}\otimes
e_{\alpha})=\phi(e_{\alpha})\phi(e_{\alpha})\rightarrow 1,$
where $\phi\in \Delta(S(G))$.
Define
$m^{\beta}_{\alpha}=\rho_{\beta}(n_{\alpha})$. Then it is easy to
see that $x\cdot m^{\beta}_{\alpha}=m^{\beta}_{\alpha}\cdot x$   for
every $x\in S(G)\otimes_{p}S(G)$. Also
\begin{equation}\label{e2-3}
\begin{split}
\lim_{\alpha}\lim_{\beta}\phi\otimes\phi\circ\pi_{S(G)\otimes_{p}S(G)}(m^{\beta}_{\alpha})-1&=\lim_{\alpha}\lim_{\beta}\phi\otimes \phi\circ \pi_{S(G)\otimes_{p}S(G)}\circ \rho_{\beta}(n_{\alpha})-1\\
&=\lim_{\alpha}\phi\otimes \phi(n_{\alpha})-1\\
&=\lim_{\alpha}\phi(e_{\alpha})^{2}-1=0.
\end{split}
\end{equation}
Set $E=I\times \Theta^{I}$, where $\Theta^{I}$ is the set of
all functions from $I$ into $\Theta$. Consider the product ordering on $E$ as follows  $$(\alpha,\beta)\leq_{E}
(\alpha^{'},\beta^{'})\Leftrightarrow \alpha\leq_{I} \alpha^{'},
\beta\leq_{\Theta^{I}}\beta^{'}\qquad (\alpha,\alpha^{'}\in I,\quad
\beta,\beta^{'}\in \Theta^{I}),$$ here  $
\beta\leq_{\Theta^{I}}\beta^{'}$ means that $\beta(d)\leq_{\Theta}
\beta^{'}(d)$ for each $d\in I$. Suppose that
$\gamma=(\alpha,\beta_{\alpha})\in E$ and
$m_{\gamma}=\rho_{\beta_{\alpha}}(n_{\alpha})\in
(S(G)\otimes_{p}S(G))\otimes_{p}(S(G)\otimes_{p}S(G))$. Now using the
iterated limit theorem \cite[page 69]{kel} in (\ref{e2-3}) we obtain
$$\phi\otimes \phi\circ
\pi_{S(G)\otimes_{p}S(G)}(m_{\gamma})\rightarrow 1$$ and similarly we obtain    $x\cdot m_{\gamma}=m_{\gamma}\cdot x$ for every $x\in
S(G)\otimes_{p}S(G)$ . By using the same argument as in the proof of
\cite[Proposition 2.2]{sah1} one can show that  $S(G)\otimes_{p}S(G)$ is left
$\phi\otimes\phi$-contractible. Hence \cite[Theorem 3.14]{nas} shows that
$S(G)$ is left $\phi$-contractible. So $L^{1}(G)$ is left $\phi$-contractible. Applying \cite[Theorem 6.1]{nas}   $G$  must be  compact.
\end{proof}
Let $G$ be a locally compact group. A real-valued function $w$ on $G$ is said to be a weight function if it has the following properties:
\begin{enumerate}
\item [(i)]  $w(x)\geq 1 \quad (x\in G)$,
\item [(ii)] $w(xy)\geq w(x)w(y)\quad (x,y\in G)$,
\item [(iii)] $w$ is measurable and locally bounded.
\end{enumerate}
 We form the
Banach space
 $$L^{1}(G,w)=\set{f:G\to\mathbb{C}:\, fw\in L^{1}(G)}.$$
    Then $L^{1}(G,w)$, with the convolution product, is a
Banach algebra and is called  Beurling algebra. See \cite{steg} for further information on Beurling algebras.

Helemskii \cite[Theorem 5.13]{hel} showed that the group algebra $L^{1}(G)$ is biprojective
if and only if $G$ is compact. At the
following theorem we extend this result.
\begin{Theorem}
Let $G$ be a locally compact group and let $w$ be a continuous
weight on $G$. Then $L^{1}(G,w)$ is approximately biprojective  if
and only if $G$ is compact.
\end{Theorem}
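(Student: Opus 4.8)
The plan is to mimic the proof of Theorem~\ref{segal}. The converse (compact $\Rightarrow$ approximately biprojective) is the easy half: if $G$ is compact then the continuous weight $w$ is bounded, and since $w\geq 1$ this forces $L^1(G,w)=L^1(G)$ with equivalent norms; by Helemskii's theorem \cite[Theorem 5.13]{hel} the algebra $L^1(G)$ is biprojective (because $G$ is compact), hence approximately biprojective, and approximate biprojectivity is evidently preserved under a topological algebra isomorphism.

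For the forward implication, suppose $L^1(G,w)$ is approximately biprojective. First I would record the two structural facts needed to feed into Theorem~\ref{app give phi}. (a) The augmentation functional $\phi_0\colon f\mapsto\int_G f$ restricts to a character on $L^1(G,w)$: indeed $w\geq 1$ gives a norm-decreasing inclusion $L^1(G,w)\hookrightarrow L^1(G)$, so $\phi_0$ is bounded on $L^1(G,w)$, and it is plainly nonzero and multiplicative. (b) $L^1(G,w)$ has a left approximate identity: writing $e_U=|U|^{-1}\mathbf{1}_U$ with $U$ ranging over a neighbourhood basis of $e$, one has $e_U\ast f=|U|^{-1}\int_U \delta_y\ast f\,dy$, so it suffices that $y\mapsto\delta_y\ast f$ be continuous from $G$ into $L^1(G,w)$; this is where the continuity of $w$ is used (together with $w(e)=1$, which follows from $1\leq w(e)\leq w(e)^2$), and it is obtained by the usual reduction to $f\in C_c(G)$ via the estimate $\|\delta_y\ast g\|_{L^1(G,w)}\leq w(y)\|g\|_{L^1(G,w)}$.

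With (a) and (b) in place, Theorem~\ref{app give phi} shows that $L^1(G,w)$ is left $\phi_0$-contractible, so by \cite[Theorem 2.1]{nas} there is $m\in L^1(G,w)$ with $a\ast m=\phi_0(a)m$ and $\phi_0(m)=1$ for every $a\in L^1(G,w)$. Since $L^1(G,w)$ is dense in $L^1(G)$ (it contains $C_c(G)$) and both convolution and $\phi_0$ are continuous on $L^1(G)$, approximating an arbitrary $a\in L^1(G)$ in norm by elements of $L^1(G,w)$ upgrades these identities to $a\ast m=\phi_0(a)m$ and $\phi_0(m)=1$ for all $a\in L^1(G)$; that is, $L^1(G)$ is left $\phi_0$-contractible. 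Then \cite[Theorem 6.1]{nas} yields that $G$ is compact.

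The only genuinely technical point I anticipate is step (b): verifying that $(e_U)$ is a left approximate identity for $L^1(G,w)$, equivalently the continuity of translation in the weighted norm. Everything else is a routine assembly of Theorem~\ref{app give phi}, \cite[Theorem 2.1]{nas}, \cite[Theorem 6.1]{nas} and the density of $L^1(G,w)$ in $L^1(G)$, exactly in the spirit of Theorem~\ref{segal}.
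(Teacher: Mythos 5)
Your argument is correct, but it diverges from the paper's in both halves, so a comparison is worth recording. For the forward direction the two proofs coincide up to the production of $m\in L^{1}(G,w)$ with $a\ast m=\phi_{0}(a)m$ and $\phi_{0}(m)=1$ (the paper does not construct the left approximate identity as you do in step (b); it simply cites \cite[Proposition 3.7.7]{steg}, which also sidesteps the submultiplicativity bookkeeping your sketch relies on). From that point you transfer the identity to all of $L^{1}(G)$ by density and invoke \cite[Theorem 6.1]{nas} --- exactly the endgame of Theorem \ref{segal} for Segal algebras, and it goes through here because $C_{c}(G)\subseteq L^{1}(G,w)\subseteq L^{1}(G)$ with a norm-decreasing inclusion. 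The paper instead exploits the weight: choosing $f$ with $\phi_{0}(f)=1$ it computes $\delta_{g}\ast m=\delta_{g}\ast(f\ast m)=(\delta_{g}\ast f)\ast m=m$, so $m$ is translation invariant, hence a nonzero constant function; then $1\in L^{1}(G,w)$ and $|G|=\int_{G}1\,dg\le\int_{G}w(g)\,dg<\infty$, and compactness follows from finiteness of the Haar measure via \cite[Theorem 15.9]{hew}. Your route is shorter and reuses machinery already in place; the paper's is more self-contained (it needs only the elementary ``finite Haar measure implies compact'' fact rather than the full characterization of left $\phi$-contractibility of $L^{1}(G)$) and makes visible where the hypothesis $w\ge 1$ is used. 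For the converse, the paper asks the reader to rerun Helemskii's argument to show $L^{1}(G,w)$ itself is biprojective, whereas you observe that a continuous weight on a compact group is bounded, so $L^{1}(G,w)=L^{1}(G)$ with equivalent norms and \cite[Theorem 5.13]{hel} applies verbatim; your version is cleaner and equally valid, since approximate biprojectivity passes through a topological algebra isomorphism.
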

\begin{proof}
%For if part,  it is well-known that $L^{1}(G,w)$ has a bounded
%approximate identity \cite[page 76]{dale lau}.
 Let $L^{1}(G,w)$ be
approximately biprojective. Since $L^{1}(G,w)$ has a left approximate
approximate  identity \cite[Propostion 3.7.7]{steg},  by Theorem \ref{app give phi}
$L^{1}(G,w)$ is  left $\phi$-contractible for every
$\phi\in\Delta(L^{1}(G,w))$ even for the augmentation character
$\phi_{0}$ which is specified by
$$\phi_{0}(f)=\int_{G} f(x)dx.$$ By
\cite[Theorem 2.1]{nas} there exists an element $m\in L^{1}(G,w)$
such that $a*m=\phi_{0}(a)m$ and $\phi_{0}(m)=1$ for every $a\in
L^{1}(G,w).$ Pick $f\in L^{1}(G,w)$ such that $\phi_{0}(f)=1.$ We
have
$$\delta_{g}\ast m=\phi_{0}(f)\delta_{g}\ast m=\delta_{g}\ast(f\ast m)=(\delta_{g}\ast f)\ast m=\phi_{0}(\delta_{g}\ast f)m=\phi_{0}(f)m=m,$$
which shows that m is a constant function in $L^{1}(G,w)$, so we can
assume that $1\in L^{1}(G,w)$. Since $w(g)\geq 1$ for every $g\in
G$, we have
$$|G|=\int _{G}1 dg\leq \int_{G}w(g)dg<\infty.$$ Now apply
\cite[Theorem 15.9]{hew} to show that $G$ is compact.

For converse, using the same argument as in \cite[Theorem
5.13]{hel}, it is easy to see that $L^{1}(G,w)$ is biprojective, so
$L^{1}(G,w)$ is approximately biprojective.
\end{proof}
\begin{Proposition}\label{prop1}
Let $G$ be a locally compact group and let $A$ be a unital Banach
algebra with $\Delta(A)\neq \emptyset$. If $A\otimes_{p}L^{1}(G)$ is
approximately biprojective, then $G$ is compact and $A$ is
approximately biprojective. Converse holds if $A$ is biprojective.
\end{Proposition}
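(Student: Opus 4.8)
The plan is to exploit the two characters available: any fixed $\phi\in\Delta(A)$ together with the augmentation character $\phi_0\in\Delta(L^1(G))$, so that $\phi\otimes\phi_0$ is a character on $A\otimes_pL^1(G)$. First I would observe that since $A$ is unital (say with identity $e$) and $L^1(G)$ has a bounded approximate identity $(e_\alpha)$, the tensor product $A\otimes_pL^1(G)$ has a left approximate identity, namely $(e\otimes e_\alpha)$. Then Theorem \ref{app give phi} applies: approximate biprojectivity of $A\otimes_pL^1(G)$ forces it to be left $\psi$-contractible for every $\psi\in\Delta(A\otimes_pL^1(G))$, in particular for $\psi=\phi\otimes\phi_0$. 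Using the standard argument (as in \cite[Proposition 2.2]{sah1} / \cite[Theorem 3.14]{nas}) that left $\phi\otimes\phi_0$-contractibility of the tensor product descends to left $\phi$-contractibility of $A$ and left $\phi_0$-contractibility of $L^1(G)$, I would deduce that $L^1(G)$ is left $\phi_0$-contractible; then \cite[Theorem 6.1]{nas} gives that $G$ is compact.

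Next, for the statement that $A$ itself is approximately biprojective, I would use compactness of $G$: when $G$ is compact, $L^1(G)$ is biprojective by \cite[Theorem 5.13]{hel}, so it admits a bounded $L^1(G)$-bimodule morphism $\rho_{L^1(G)}:L^1(G)\to L^1(G)\otimes_pL^1(G)$ splitting $\pi_{L^1(G)}$. Given the net $(\rho_\beta):A\otimes_pL^1(G)\to(A\otimes_pL^1(G))\otimes_p(A\otimes_pL^1(G))$ witnessing approximate biprojectivity of the tensor product, I would manufacture a net of $A$-bimodule maps $A\to A\otimes_pA$ by feeding in elements of the form $a\otimes e_\alpha$ (with $e_\alpha$ from an approximate identity of $L^1(G)$), composing with $\rho_\beta$, and then applying the slice map $\mathrm{id}_A\otimes\phi_0\otimes\mathrm{id}_A\otimes\phi_0$ (using $\phi_0\in\Delta(L^1(G))$, together with the flip identification $(A\otimes_pL^1(G))\otimes_p(A\otimes_pL^1(G))\cong (A\otimes_pA)\otimes_p(L^1(G)\otimes_pL^1(G))$ and then applying $\phi_0\otimes\phi_0$ to the second factor). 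One checks this yields bounded $A$-bimodule morphisms $\sigma_{\alpha,\beta}:A\to A\otimes_pA$ with $\pi_A\circ\sigma_{\alpha,\beta}(a)\to a$; an iterated-limit argument over the product directed set $I\times\Theta^I$, exactly as in the proof of the $SIN$-group theorem above using \cite[page 69]{kel}, then produces a single net, so $A$ is approximately biprojective.

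For the converse, assume $A$ is biprojective and $G$ is compact. Then both $A$ and $L^1(G)$ are biprojective, and the projective tensor product of two biprojective Banach algebras is biprojective: if $\rho_A:A\to A\otimes_pA$ and $\rho_{L^1(G)}:L^1(G)\to L^1(G)\otimes_pL^1(G)$ are the splitting bimodule morphisms, then under the canonical isometric isomorphism $(A\otimes_pL^1(G))\otimes_p(A\otimes_pL^1(G))\cong (A\otimes_pA)\otimes_p(L^1(G)\otimes_pL^1(G))$ the map $\rho_A\otimes\rho_{L^1(G)}$ is a bounded $A\otimes_pL^1(G)$-bimodule morphism splitting $\pi_{A\otimes_pL^1(G)}$. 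Hence $A\otimes_pL^1(G)$ is biprojective, and in particular approximately biprojective.

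The main obstacle I anticipate is the forward direction's second half: rigorously verifying that the slice-map construction produces genuine $A$-bimodule morphisms and that the two separate limits (over the approximate identity index $\alpha$ and over the biprojectivity net index $\beta$) can be amalgamated into one net. The bimodule property needs care because the $A$-action on $A\otimes_pL^1(G)$ only acts on the first leg, so one must check that applying $\phi_0$ to the second leg is compatible with the module actions; and the limit-amalgamation is precisely the iterated-limit theorem maneuver already used in the $SIN$ case, which I would invoke in the same way. The descent of $\phi\otimes\phi_0$-contractibility to $\phi_0$-contractibility of $L^1(G)$ is routine given \cite[Theorem 3.14]{nas}.
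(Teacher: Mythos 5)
Your treatment of the first assertion ($G$ compact) and of the converse coincides with the paper's: the paper likewise feeds the approximate identity $(e_{A}\otimes e_{\alpha})$ into Theorem \ref{app give phi}, descends via \cite[Theorem 3.14]{nas} to left $\phi$-contractibility of $L^{1}(G)$, and invokes \cite[Theorem 6.1]{nas}; for the converse it simply cites \cite[Proposition 2.4]{rams}, which is exactly the $\rho_{A}\otimes\rho_{L^{1}(G)}$ computation you write out. Where you genuinely diverge is in deducing approximate biprojectivity of $A$. The paper uses the compactness of $G$ just established to realize the group character associated with $\phi$ as a central idempotent $\rho\in L^{1}(G)$ with $\rho\ast f=f\ast\rho=\phi(f)\rho$, so that $e_{A}\otimes\rho$ cuts out a copy of $A$ inside $A\otimes_{p}L^{1}(G)$, and then quotes the hereditary property of \cite[Proposition 2.6]{rams}; you instead slice with $\phi_{0}$ against an approximate identity and amalgamate the two limits over $I\times\Theta^{I}$. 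Your route does work, and has the mild advantage of not needing compactness at that stage, but the point you flag as delicate is indeed the crux and deserves to be made explicit: writing $B=A\otimes_{p}L^{1}(G)$, the first-leg action $b\cdot(a\otimes f)=ba\otimes f$ on $B$ (and on $B\otimes_{p}B$) is the norm limit of the $B$-actions of $b\otimes e_{\alpha'}$, and since each $\rho_{\beta}$ is a continuous $B$-bimodule morphism it intertwines such limits exactly; the slice map then carries the first-leg action to the $A$-bimodule action on $A\otimes_{p}A$, so the maps $\sigma_{\alpha,\beta}$ are honest $A$-bimodule morphisms with $\pi_{A}\circ\sigma_{\alpha,\beta}(a)\to\phi_{0}(e_{\alpha})a$ as $\beta$ increases, after which Kelley's iterated limit theorem applies verbatim as in the $SIN$-group theorem. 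The paper's idempotent argument buys a shorter proof with no double limit; yours buys independence from the corner construction at the cost of these verifications.
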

\begin{proof}
Suppose that $B=A\otimes_{p}L^{1}(G)$ is  approximate biprojective.
It is easy to see that $(e_{A}\otimes e_{\alpha})$ is an approximate
identity for $B$, where  $e_{A}$ is an identity for $A$ and $(e_{\alpha})$ is
a bounded approximate identity for $L^{1}(G)$. Let
$\psi\in\Delta(A)$ and $\phi\in\Delta(L^{1}(G))$. Then
Theorem \ref{app give phi} implies that  $B$ is left
$\psi\otimes\phi$-contractible. By \cite[Theorem 3.14]{nas}
$L^{1}(G)$ is left $\phi$-contractible which implies that $G$ is
compact, see \cite[Theorem 6.1]{nas}.

Let $\rho:G\rightarrow
\mathbb{C}$ be a group character correspond to $\phi$, see
\cite[Theorem 23.7]{hew}. It is easy to see that $\rho\in
L^{\infty}(G)$. Since $G$ is compact, $L^{\infty}(G)\subseteq
L^{1}(G)$. Then $\rho\in L^{1}(G)$. Also, since
$\rho*f=f*\rho=\phi(f)\rho$ for every $f\in L^{1}(G)$. One can
easily see that $\rho$  is an  idempotent in $L^{1}(G)$. Now
by similar argument as in \cite[Proposition 2.6]{rams}, one can easily see that  $A$ is approximately biprojective.

 Conversely, it is well-known that $L^{1}(G)$ is biprojective if
and only if $G$ is compact. Now apply \cite[Proposition 2.4]{rams},
to complete the proof.
\end{proof}
We recall that a Banach algebra $A$ is left character contractible, if $A$ is left $\phi$-contractible for every $\phi\in\Delta(A)\cup\{0\}$, for more information on this notion, see \cite{nas}.
\begin{Proposition}
Let $G$ be a locally compact group. Then the followings are equivalent:
\begin{enumerate}
\item [(i)] $L^{1}(G)\otimes_{p}M(G)$ is biprojective;
\item [(ii)] $L^{1}(G)\otimes_{p}M(G)$ is approximately biprojective;
\item [(iii)] $G$ is finite.
\end{enumerate}
\end{Proposition}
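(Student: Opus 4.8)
The equivalences decompose into one trivial implication, a short converse, and one substantive direction. The implication $(i)\Rightarrow(ii)$ needs no proof, since every biprojective Banach algebra is approximately biprojective. For $(iii)\Rightarrow(i)$: when $G$ is finite we have $L^{1}(G)=M(G)=\ell^{1}(G)$, whence $L^{1}(G)\otimes_{p}M(G)\cong\ell^{1}(G)\otimes_{p}\ell^{1}(G)\cong\ell^{1}(G\times G)=L^{1}(G\times G)$ as Banach algebras; since $G\times G$ is finite, hence compact, \cite[Theorem 5.13]{hel} shows this algebra is biprojective, so $L^{1}(G)\otimes_{p}M(G)$ is biprojective as well.

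The content is in $(ii)\Rightarrow(iii)$, which I would prove in three stages. Since $M(G)$ is unital with identity $\delta_{e}$, we have $\Delta(M(G))\neq\emptyset$, so I would first apply Proposition \ref{prop1} with $A=M(G)$ --- after the obvious identification $L^{1}(G)\otimes_{p}M(G)\cong M(G)\otimes_{p}L^{1}(G)$ --- to conclude that $G$ is compact and that $M(G)$ is itself approximately biprojective. Secondly, as $\delta_{e}$ is a two-sided bounded approximate identity for $M(G)$, Theorem \ref{app give phi} then shows that $M(G)$ is left $\phi$-contractible for \emph{every} $\phi\in\Delta(M(G))$.

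The decisive third stage is to test this property against the ``atomic part'' functional $\epsilon_{d}\colon M(G)\to\mathbb{C}$, $\epsilon_{d}(\mu)=\sum_{x\in G}\mu(\{x\})$, whose defining sum is absolutely convergent with $|\epsilon_{d}(\mu)|\leq\|\mu\|$. Using $(\mu\ast\nu)(\{g\})=\sum_{x\in G}\mu(\{x\})\,\nu(\{x^{-1}g\})$ and rearranging the resulting absolutely convergent double series, one checks that $\epsilon_{d}$ is multiplicative, so $\epsilon_{d}\in\Delta(M(G))$; moreover, if $G$ is non-discrete then Haar measure on $G$ --- hence every $f\in L^{1}(G)$ --- is non-atomic, so $\epsilon_{d}$ vanishes on $L^{1}(G)$. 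Now argue by contradiction: if $G$ is infinite then, being compact, it is non-discrete, so by left $\epsilon_{d}$-contractibility of $M(G)$ there is $m\in M(G)$ with $\mu\ast m=\epsilon_{d}(\mu)m$ for all $\mu\in M(G)$ and $\epsilon_{d}(m)=1$, in particular $m\neq 0$; taking $\mu=f\in L^{1}(G)$ gives $f\ast m=0$ for every $f\in L^{1}(G)$, so $e_{\alpha}\ast m=0$ for a bounded approximate identity $(e_{\alpha})$ of $L^{1}(G)$ concentrated at $e$ with integral $1$. On the other hand $e_{\alpha}\ast m\to m$ in the weak-$*$ topology $\sigma(M(G),C_{0}(G))$, since for $h\in C_{0}(G)$ the quantity $\langle e_{\alpha}\ast m,h\rangle=\int_{G}\big(\int_{G}h(st)e_{\alpha}(s)\,ds\big)dm(t)$ converges to $\int_{G}h\,dm$ by left uniform continuity of $h$. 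Hence $m=0$, a contradiction, and therefore $G$ is finite.

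I expect the real obstacle to be this third stage: verifying that $\epsilon_{d}$ is genuinely a character of $M(G)$ is a routine but slightly delicate manipulation of the atomic parts of measures, and the weak-$*$ convergence $e_{\alpha}\ast m\to m$, though standard, must be handled with a little care. The remaining ingredients --- Proposition \ref{prop1}, Theorem \ref{app give phi}, and \cite[Theorem 5.13]{hel} --- are all available.
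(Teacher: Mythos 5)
Your proposal is correct, and for the substantive implication it takes a genuinely different route from the paper. The two arguments agree up to the point where $M(G)$ is shown to be approximately biprojective via Proposition \ref{prop1} (after flipping the tensor factors) and then left $\phi$-contractible for every $\phi\in\Delta(M(G))$ via Theorem \ref{app give phi}. From there the paper finishes by citation: it notes that $M(G)$, being unital, is also left $0$-contractible by \cite[Proposition 3.4]{nas}, hence left character contractible, and then invokes \cite[Corollary 6.2]{nas}, which says precisely that left character contractibility of $M(G)$ forces $G$ to be finite. You instead extract finiteness by hand: $G$ is compact by Proposition \ref{prop1}, so if it were infinite it would be non-discrete; testing left $\phi$-contractibility against the discrete-part character $\epsilon_{d}$ (which annihilates $L^{1}(G)$ when $G$ is non-discrete) produces $m\in M(G)$ with $f\ast m=0$ for all $f\in L^{1}(G)$ yet $\epsilon_{d}(m)=1$, and the weak-$*$ limit $e_{\alpha}\ast m\to m$ kills $m$. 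Your verification that $\epsilon_{d}\in\Delta(M(G))$ and the uniform-continuity argument for $e_{\alpha}\ast m\to m$ are both sound, so the argument is complete and self-contained where the paper leans on external results; the price is length, the gain is that no structure theory of character contractibility for measure algebras is needed. Your explicit treatment of (iii)$\Rightarrow$(i) via $L^{1}(G)\otimes_{p}M(G)\cong L^{1}(G\times G)$ for finite $G$ is also a legitimate filling-in of a step the paper simply declares clear.
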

\begin{proof}
(i)$\Rightarrow$(ii) is clear.

(ii)$\Rightarrow$(iii) Suppose that  $L^{1}(G)\otimes_{p}M(G)$ is
approximately biprojective. Since  $M(G)$ is unital, Proposition
\ref{prop1} shows that $M(G)$ is approximately biprojective. So by Theorem
\ref{app give phi} $M(G)$ is left $\phi$-contractible for
every $\phi\in\Delta(M(G))$. Also by \cite[Proposition 3.4]{nas} $M(G)$ is  left
$0$-contractible. Hence $M(G)$ is left character contractible.
Therefore by \cite[Corollary 6.2]{nas}, $G$ is finite.

(iii)$\Rightarrow$(i) is clear.
\end{proof}
\begin{Proposition}
Let $G$ be an amenable  locally compact group. If  $L^{1}(G)\otimes_{p}\mathcal{A}(G)$ is approximately biprojective, then $G$ is finite.
\end{Proposition}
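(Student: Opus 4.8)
The plan is to follow the template of Proposition \ref{prop1}: exhibit a character on $B:=L^{1}(G)\otimes_{p}\mathcal{A}(G)$, use Theorem \ref{app give phi} to deduce left $\phi$-contractibility of $B$, and then split this property over the two tensor factors. First I would invoke amenability of $G$: by Leptin's theorem $\mathcal{A}(G)$ admits a bounded approximate identity $(u_{\beta})$, and since $L^{1}(G)$ has a bounded approximate identity $(e_{\alpha})$, the net $(e_{\alpha}\otimes u_{\beta})$ is a bounded approximate identity for $B$. Boundedness here is what allows the passage from elementary tensors to arbitrary elements of $B$, and this is the only place where the amenability hypothesis enters.

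Next, $B$ is approximately biprojective by assumption and has a left approximate identity, so Theorem \ref{app give phi} shows that $B$ is left $\Psi$-contractible for every $\Psi\in\Delta(B)$. I would apply this with $\Psi=\phi_{0}\otimes\phi_{e}$, where $\phi_{0}\in\Delta(L^{1}(G))$ is the augmentation character and $\phi_{e}\in\Delta(\mathcal{A}(G))$ is evaluation at the identity $e\in G$; this $\Psi$ lies in $\Delta(B)$ because $\phi\otimes\psi$ is a character on $A\otimes_{p}B$ whenever $\phi\in\Delta(A)$ and $\psi\in\Delta(B)$. Then, exactly as in the proof of Proposition \ref{prop1}, \cite[Theorem 3.14]{nas} forces $L^{1}(G)$ to be left $\phi_{0}$-contractible and $\mathcal{A}(G)$ to be left $\phi_{e}$-contractible.

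From the first of these, \cite[Theorem 6.1]{nas} gives that $G$ is compact. For the second I would argue by hand: left $\phi_{e}$-contractibility of $\mathcal{A}(G)$ produces $u\in\mathcal{A}(G)$ with $vu=v(e)u$ for all $v\in\mathcal{A}(G)$ and $u(e)=1$. Evaluating at an arbitrary $s\in G$ yields $u(s)\bigl(v(s)-v(e)\bigr)=0$, and since $\mathcal{A}(G)$ separates the points of $G$ we get $u(s)=0$ for all $s\neq e$. Thus $u$ is a nonzero continuous function (recall $\mathcal{A}(G)\subseteq C_{0}(G)$) supported on the single point $e$, so $\{e\}$ is open and hence $G$ is discrete. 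A group that is both compact and discrete is finite, which completes the proof.

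I do not anticipate a genuine obstacle, as the statement is essentially a corollary of results already proved in the paper. The only points needing a little care are the verification that $B$ has a bounded approximate identity (which is where amenability is used) and the harmless observation that $\phi_{0}\otimes\phi_{e}\in\Delta(B)$.
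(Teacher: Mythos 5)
Your proof is correct and follows essentially the same route as the paper: a bounded approximate identity for $L^{1}(G)\otimes_{p}\mathcal{A}(G)$ via Leptin's theorem, Theorem \ref{app give phi} to get left $\phi\otimes\psi$-contractibility, \cite[Theorem 3.14]{nas} to split it over the factors, and then compactness plus discreteness. The only difference is that where the paper cites \cite[Proposition 6.6]{nas} for discreteness, you prove that step directly from the point-separating property of $\mathcal{A}(G)$, which is a harmless (and correct) substitution.
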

\begin{proof}
It is well-known that $L^{1}(G)$ has a bounded approximate identity
and by Leptin's theorem amenability of $G$ implies that
$\mathcal{A}(G)$ has a bounded approximate identity, see
\cite[Theorem 7.1.3]{run}. Therefore
$L^{1}(G)\otimes_{p}\mathcal{A}(G)$ has a bounded approximate
identity. Suppose that $L^{1}(G)\otimes_{p}\mathcal{A}(G)$ is
approximately biprojective. Then by  Theorem \ref{app give phi},
$L^{1}(G)\otimes_{p}\mathcal{A}(G)$ is left
$\phi\otimes\psi$-contractible for every $\phi\in\Delta(L^{1}(G))$
and $\psi\in\Delta(\mathcal{A}(G))$. Now by \cite[Theorem
3.14]{nas}, $L^{1}(G)$ is left $\phi$-contractible and
$\mathcal{A}(G)$ is left $\psi$-contractible. By  \cite[Proposition
6.6]{nas}, $G$ is discrete and by \cite[Proposition  6.1]{nas} $G$
is compact, therefore $G$ must be finite.
\end{proof}

%------------------------------------------------------------------------------------------------------------------------------------------
%%%%%%%%%%%%%%%%%%%%%%%%%%%%%%%%%%%%%%%%%%%%%%%%%%%%%%%%%%%%%%%%%%%%%%%%%%%%%%%%%%%%%%%%%%%%%%%%%%%%%%%%%%%%%%%%%%%%%%%%%%%%%%%%%%%%%%%%%%%
%------------------------------------------------------------------------------------------------------------------------------------------
\section{$\phi$-biflatness}
In \cite{sah1}, the  authors  studied  $\phi$-biflatness of group algebras.
 In this section we continue the  study of
$\phi$-biflatness of Segal
algebras and the second duals of group algebras.
\begin{Theorem}\label{bif}
Let $A$ be a Banach algebra with a left approximate identity and let $\phi\in\Delta(A)$. If $A$ is $\phi$-biflat, then $A$ is left $\phi$-amenable.
\end{Theorem}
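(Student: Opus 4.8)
The plan is to unwind the definition of $\phi$-biflatness and combine it with the left approximate identity to manufacture the functional $m\in A^{**}$ witnessing left $\phi$-amenability. By hypothesis there is a bounded $A$-bimodule morphism $\rho\colon A\to (A\otimes_{p}A)^{**}$ with $\tilde{\phi}\circ\pi^{**}_{A}\circ\rho(a)=\phi(a)$ for every $a\in A$. The first step is to pick $a_{0}\in A$ with $\phi(a_{0})=1$ and set $M=\rho(a_{0})\in(A\otimes_{p}A)^{**}$; then $\tilde{\phi}\circ\pi^{**}_{A}(M)=1$. The bimodule identity $\rho(ab)=a\cdot\rho(b)=\rho(a)\cdot b$ gives, upon fixing $b=a_{0}$, that $a\cdot M=\rho(a a_{0})$, so applying $\tilde{\phi}\circ\pi^{**}_{A}$ yields $\tilde{\phi}\circ\pi^{**}_{A}(a\cdot M)=\phi(a a_{0})=\phi(a)$. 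Thus $N:=\pi^{**}_{A}(M)\in A^{**}$ satisfies $\tilde{\phi}(N)=1$ and $\tilde{\phi}(a\cdot N)=\phi(a)$ for all $a\in A$.

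The remaining step is to upgrade $\tilde{\phi}(a\cdot N)=\phi(a)$ to the genuine module equation $a\cdot N=\phi(a)N$ in $A^{**}$, and this is where the left approximate identity enters — I expect this to be the main obstacle. The idea is that $N$ need not itself be the desired element, but a weak$^{*}$-cluster point of a net built from $N$ and the approximate identity will be. Let $(e_{\alpha})$ be a left approximate identity for $A$. Consider the net $(e_{\alpha}\cdot N)_{\alpha}$ in $A^{**}$ and let $m$ be a weak$^{*}$-accumulation point. For each fixed $a\in A$ one has $a e_{\alpha}\to a$ in norm, hence $a\cdot(e_{\alpha}\cdot N)=(ae_{\alpha})\cdot N\to a\cdot N$ in the appropriate weak$^{*}$ sense along the subnet defining $m$; the point is to check, using weak$^{*}$-weak$^{*}$ continuity of the left action of a fixed $a\in A$ on $A^{**}$, that $a\cdot m$ is a weak$^{*}$-cluster point of $(a\cdot N)$-type expressions while simultaneously $\phi(e_{\alpha})a\cdot N\to a\cdot N$ — so that combining these forces $a\cdot m=\phi(a)m$. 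One must also verify $\tilde{\phi}(m)=1$: since $\tilde{\phi}(e_{\alpha}\cdot N)=\tilde{\phi}(N\cdot\,?)$ is not immediate, instead use $\tilde{\phi}(e_{\alpha}\cdot N)=\tilde{\phi}$ evaluated via the right action, or more simply argue $\tilde{\phi}(e_{\alpha}\cdot N)=(e_{\alpha}\cdot N)(\phi)=N(\phi\cdot e_{\alpha})=N(\phi(e_{\alpha})\phi)=\phi(e_{\alpha})\tilde{\phi}(N)\to\tilde{\phi}(N)=1$, and pass to the cluster point using weak$^{*}$-continuity of evaluation at $\phi$.

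An alternative, cleaner route to the same conclusion — which I would likely adopt to avoid the convergence bookkeeping — is to invoke the known characterizations of left $\phi$-amenability: namely, $A$ is left $\phi$-amenable if and only if there is a bounded net $(u_{i})$ in $A$ with $au_{i}-\phi(a)u_{i}\to0$ and $\phi(u_{i})\to1$ (an approximate-diagonal-type condition from \cite{kan}). From $\rho$ one produces such a net by taking $\rho_{\beta}\colon A\to A\otimes_{p}A$ bounded nets approximating $\rho$ weak$^{*}$ (Goldstine), composing with $\pi_{A}$, evaluating at $a_{0}$, and again smoothing with $(e_{\alpha})$; the iterated-limit theorem \cite[page 69]{kel} then consolidates the two nets into one, exactly as in the proof of the second theorem of Section~2. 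The decisive estimate is that the $A$-bimodule morphism property of $\rho$ transfers to asymptotic bimodule-morphism behaviour of the $\rho_{\beta}$ on the range of the approximate identity, which is routine but is the technical heart of the argument.
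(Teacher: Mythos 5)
Your proposal has a genuine gap, and it occurs exactly at the step you flag as ``the main obstacle.'' In both of your routes you collapse $\rho(a_0)\in(A\otimes_p A)^{**}$ by applying the \emph{product} map $\pi_A^{**}$. This is the wrong slice: $\pi_A$ intertwines the right action of $A$ on $A\otimes_p A$ with the ordinary right multiplication on $A$, so the element $N=\pi_A^{**}(\rho(a_0))$ carries no trace of $\phi$ in its second leg. Concretely, $a\cdot N=\pi_A^{**}(\rho(aa_0))$ while $\phi(a)N=\pi_A^{**}(\rho(\phi(a)a_0))$, and since $aa_0\neq\phi(a)a_0$ in $A$ there is no reason these agree. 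The identity $\tilde\phi(a\cdot N)=\phi(a)$ that you derive is vacuous (it holds for \emph{any} $N$ with $\tilde\phi(N)=1$, since $\phi\cdot a=\phi(a)\phi$), and the smoothing with $(e_\alpha)$ is idle: your own computation gives $a\cdot m=\lim_\alpha(ae_\alpha)\cdot N=a\cdot N$ in norm, so the cluster point $m$ satisfies $a\cdot m=a\cdot N$, which returns you to the unproved claim $a\cdot N=\phi(a)\,(\cdot)$. Your second route fails for the same structural reason: $\rho(a_0)$ is not even approximately central, since $a\cdot\rho(a_0)-\rho(a_0)\cdot a=\rho(aa_0-a_0a)\neq0$ in general, so Goldstine does not produce a net to which the characterization of left $\phi$-amenability from \cite{kan} applies, and composing with $\pi_A$ again converts the right action into genuine right multiplication rather than multiplication by the scalar $\phi(a)$.

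The missing idea is to slice the \emph{second} tensor factor by $\phi$ before doing anything else: with $L=\ker\phi$ and $q\colon A\to A/L$ the quotient map, form $g=(id_A\otimes\overline{\phi})^{**}\circ(id_A\otimes q)^{**}\circ\rho$, which converts the right $A$-action into multiplication by $\phi(a)$ while remaining a left $A$-module map. The left approximate identity is then used for something quite different from smoothing: it gives $\overline{AL}=L$, and writing $l=\lim a_nl_n$ with $l_n\in L$ one gets $g(l)=\lim(\cdots)(\rho(a_n)\cdot l_n)=0$ because $q\circ R_{l_n}=0$; hence $g$ factors through the one-dimensional algebra $A/L$. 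Setting $m=\lambda^{**}\circ\overline{g}(a_0+L)$, the exact identity $aa_0+L=\phi(a)a_0+L$ in $A/L$ yields $am=\phi(a)m$ on the nose, and $\tilde\phi(m)=1$ follows by a Goldstine/weak$^*$-continuity argument from $\tilde\phi\circ\pi_A^{**}\circ\rho(a_0)=1$. Without the slice through $A/\ker\phi$ and the factorization $g|_L=0$, no amount of averaging over the approximate identity will produce the module equation $am=\phi(a)m$.
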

\begin{proof}
Let  $A$ be a $\phi$-biflat Banach algebra. Then
there exists a bounded $A$-bimodule morphism
$\rho:A\rightarrow(A\otimes_{p}A)^{**}$ such that
$\tilde{\phi}\circ\pi^{**}_{A}\circ\rho(a)=\phi(a)$ for every $a\in A.$
 Set
$g=(id_{A}\otimes \overline{\phi})^{**}\circ(id_{A}\otimes
q)^{**}\circ\rho:A\rightarrow (A\otimes_{p}\mathbb{C})^{**}$, where
$L=\ker\phi$, $q:A\rightarrow \frac{A}{L}$ is the quotient map and
$\overline{\phi}:\frac{A}{L}\rightarrow \mathbb{C}$ is a character
defined by $\overline{\phi}(a+L)=\phi(a)$ for every $a\in A.$ We see
that $g$ is a bounded  left $A$-module morphism. We claim that
$g(l)=0$ for every $l\in L.$ Since $A$ has a left approximate
identity, $\overline{AL}=L$. Then for each $l\in L$ there exist
sequences $(a_{n})\subseteq A$ and $(l_{n})\subseteq L$    such that
$a_nl_{n}\rightarrow l$. For $b\in L$, define a map
$R_{b}:A\rightarrow L$ by $R_{b}(a)=ab$ for every $a\in A$. Since $
q \circ R_{l_{n}}=0$, we have
\begin{equation*}
\begin{split}
g(l)=(id_{A}\otimes \overline{\phi})^{**}\circ(id_{A}\otimes q)^{**}(\rho(l)) &=\lim_{n}(id_{A}\otimes
\overline{\phi})^{**}\circ(id_{A}\otimes
q)^{**}(\rho(a_nl_n))\\
&=\lim_{n}(id_{A}\otimes
\overline{\phi})^{**}\circ(id_{A}\otimes q)^{**}(\rho(a_n)\cdot l_n)\\
&=\lim_{n}(id_{A}\otimes
\overline{\phi})^{**}\circ(id_{A}\otimes q)^{**}\circ(id_{A}\otimes R_{l_{n}})^{**}(\rho(a_n))\\
&=\lim_{n}((id_{A}\otimes \overline{\phi})\circ(id_{A}\otimes
q)\circ(id_{A}\otimes
R_{l_{n}}))^{**}(\rho(a_n))\\
&=\lim_{n}((id_{A}\otimes \overline{\phi})\circ( id_{A}\otimes (q \circ
R_{l_{n}}))^{**}(\rho(a_n))=0.
\end{split}
\end{equation*}
 Therefore $g$
induce a map $\overline{g}:\frac{A}{L}\rightarrow (A\otimes_{p}\mathbb{C})^{**}$
which is defined by $\overline{g}(a+L)=g(a)$ for all $a\in A.$ It is easy to  see that  $\overline{g}$
 is a bounded left $A$-module
morphism. Pick $a_{0}$ in $A$ such that $\phi(a_{0})=1$. We denote
$\lambda:A\otimes_{p} \mathbb{C}\rightarrow A$ for a map which is
specified by $\lambda(a\otimes z)=az$ for every $a\in A$ and $z\in
\mathbb{C}$. Set $m=\lambda^{**}\circ \overline{g}(a_{0}+L)\in A^{**}$, we
claim that $am=\phi(a)m$ and $\tilde{\phi}(m)=1$ for every $a\in A.$
Since $\lambda^{**}$  is a left $A$-module morphism and  also
since $aa_{0}+L=\phi(a)a_{0}+L$, we have
\begin{equation}\label{1}
\begin{split}
am=a \lambda^{**}\circ \overline{g}(a_{0}+L)= \lambda^{**}\circ
\overline{g}(aa_{0}+L)=\lambda^{**}\circ
\overline{g}(\phi(a)a_{0}+L)=\phi(a)\lambda^{**}\circ \overline{g}(a_{0}+L)=\phi(a)m
\end{split}
\end{equation}
for every $a\in A.$  Since $\rho(a_{0})\in (A\otimes_{p}A)^{**}$, by Goldestine's theorem there exists a net $(a_{\alpha})$ in
$A\otimes_{p}A$ such that $a_{\alpha}\xrightarrow{w^{*}}\rho(a_{0})$. So
\begin{equation}\label{2}
\begin{split}
\tilde{\phi}(m)=m(\phi) =[ \lambda^{**}\circ \overline{g}(a_{0}+L)](\phi)
&=[\lambda^{**}\circ g(a_{0})](\phi)\\
&=[\lambda^{**}\circ(id_{A}\otimes \overline{\phi})^{**}\circ(id_{A}\otimes
q)^{**}(\rho(a_{0}))](\phi)\\
&=[(\lambda\circ(id_{A}\otimes \overline{\phi})\circ(id_{A}\otimes
q))^{**}(\rho(a_{0}))](\phi)\\
&=[w^{*}-\lim(\lambda\circ(id_{A}\otimes \overline{\phi})\circ(id_{A}\otimes
q))^{**}(a_{\alpha}))](\phi)\\
&=\lim(\lambda\circ(id_{A}\otimes \overline{\phi})\circ(id_{A}\otimes
q))^{**}(a_{\alpha})(\phi)\\
&=\lim(\lambda\circ(id_{A}\otimes \overline{\phi})\circ(id_{A}\otimes
q)(a_{\alpha})(\phi)\\
&=\lim\phi\circ\lambda\circ(id_{A}\otimes \overline{\phi})\circ(id_{A}\otimes
q)(a_{\alpha})\\
&=\lim\phi\circ\pi_{A}(a_{\alpha}).
\end{split}
\end{equation}
On the other hand since $a_{\alpha}\xrightarrow{w^{*}} \rho(a_{0})$,
the $w^{*}$-continuity of $\pi^{**}_{A}$ implies that
$$\pi_{A}(a_{\alpha})=\pi^{**}_{A}(a_{\alpha})\xrightarrow{w^{*}}\pi^{**}_{A}(\rho(a_{0})).$$ Thus
\begin{equation}\label{3}
\begin{split}
\phi(\pi_{A}(a_{\alpha}))=\pi_{A}(a_{\alpha})(\phi)=\pi^{**}_{A}(a_{\alpha})(\phi)\rightarrow\pi^{**}_{A}(\rho(a_{0}))(\phi)=\tilde{\phi}\circ \pi^{**}_{A}(\rho(a_{0}))=1.
\end{split}
\end{equation}
We see that  from (\ref{2}) and (\ref{3}), $\tilde{\phi}(m)=1$.
Combine this result with (\ref{1}), implies that $A$ is left
$\phi$-amenable.
\end{proof}
\begin{cor}\label{bi}
If $S(G)$ is $\phi$-biflat. Then $G$ is amenable
\end{cor}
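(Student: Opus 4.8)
The plan is to combine Theorem \ref{bif} with the known characterization of left $\phi$-amenability for Segal algebras. Recall that $S(G)$ always has a left approximate identity, so Theorem \ref{bif} applies directly: if $S(G)$ is $\phi$-biflat for some $\phi\in\Delta(S(G))$, then $S(G)$ is left $\phi$-amenable. Since the statement of the corollary does not name a particular character, the natural choice is the augmentation character $\phi_{0}$, which by \cite[Lemma 2.2]{alagh} restricts from $L^{1}(G)$ to a genuine character on $S(G)$.

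The key step is then to transfer left $\phi_{0}$-amenability from $S(G)$ up to $L^{1}(G)$. Because $S(G)$ is a dense left ideal of $L^{1}(G)$ and the left action of $L^{1}(G)$ on $S(G)$ extends continuously (using the inequality $\|f\ast g\|_{S}\le\|f\|_{L^{1}}\|g\|_{S}$), a left $\phi_{0}$-mean $m\in S(G)^{**}$ — or the corresponding bounded net in $S(G)$ — should yield a left $\phi_{0}$-mean for $L^{1}(G)$ by a routine density and continuity argument, essentially the same passage from $S(G)$ to $L^{1}(G)$ already carried out in the proof of Theorem \ref{segal}. Once $L^{1}(G)$ is left $\phi_{0}$-amenable, the well-known result (Kaniuth--Lau--Pym, cited in the paper as part of the left $\phi$-amenability theory, e.g. \cite{kan}) that $L^{1}(G)$ is left $\phi_{0}$-amenable if and only if $G$ is amenable finishes the proof.

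So the proof I would write is short: first invoke that $S(G)$ has a left approximate identity and apply Theorem \ref{bif} with $\phi=\phi_{0}$ to get left $\phi_{0}$-amenability of $S(G)$; then push this property to $L^{1}(G)$ using density of $S(G)$ in $L^{1}(G)$ and continuity of the module action; finally conclude amenability of $G$ from the standard characterization of left $\phi_{0}$-amenability of $L^{1}(G)$.

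The main obstacle — though a mild one — is justifying the transfer of the left $\phi_{0}$-mean from $S(G)$ to $L^{1}(G)$ cleanly: one must check that the defining identity $a\cdot m=\phi_{0}(a)m$ survives the extension of scalars from $S(G)$ to $L^{1}(G)$, which requires that the $S(G)^{**}$-valued or net-valued mean interacts well with the $L^{1}(G)$-module structure. If instead one works with the bidual formulation directly, one should note that left $\phi_{0}$-amenability of $S(G)$ gives a net $(m_{\alpha})$ in $S(G)$ with $a\ast m_{\alpha}-\phi_{0}(a)m_{\alpha}\to 0$ and $\phi_{0}(m_{\alpha})=1$; the same net, viewed in $L^{1}(G)$, witnesses left $\phi_{0}$-amenability of $L^{1}(G)$ by continuity of $\ast$ and density, and then amenability of $G$ follows.
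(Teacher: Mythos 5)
Your first step coincides exactly with the paper's: $S(G)$ has a left approximate identity, so Theorem \ref{bif} gives left $\phi$-amenability of $S(G)$. Where you differ is in the second half: the paper finishes in one line by citing \cite[Corollary 3.4]{alagh}, which says precisely that left $\phi$-amenability of an abstract Segal algebra forces amenability of $G$, whereas you reprove a special case of that transfer by hand (bounded approximating net in $S(G)$, the inequality $\norm{f\ast g}_{L^1}\leq\norm{f}_{L^1}\norm{g}_{L^1}\leq\norm{f}_{L^1}\norm{g}_{S}$, density of $S(G)$ in $L^{1}(G)$, then Kaniuth--Lau--Pym for $L^{1}(G)$). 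Your argument is workable, and it has the small virtue of being self-contained, but note two points. First, you should not ``choose'' $\phi=\phi_{0}$: the corollary assumes $\phi$-biflatness for a given character $\phi$, and $\phi$-biflatness for one character does not yield it for another; fortunately your transfer argument works verbatim for an arbitrary $\phi\in\Delta(S(G))$, since every such $\phi$ is the restriction of a character of $L^{1}(G)$ and the Kaniuth--Lau--Pym characterization (left $\phi$-amenability of $L^{1}(G)$ if and only if $G$ is amenable) holds for every character. Second, to run the net argument you need the net characterization of left $\phi$-amenability (e.g.\ \cite[Theorem 1.4]{kan}), since the mean a priori lives in $S(G)^{**}$ rather than in $S(G)$; once you invoke that, the passage to $L^{1}(G)$ by boundedness and density is routine, so the proof is correct.
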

\begin{proof}
Since every Segal algebra has a left approximate identity, by the
previous Theorem $S(G)$ is left $\phi$-amenable. Then
\cite[Corollary 3.4]{alagh} implies that $G$ is amenable.
\end{proof}
We show that the converse of  Theorem \ref{bif} is also valid  for symmetric Segal algebras.
\begin{Proposition}
Let $G$ be a  locally compact group, and $S(G)$ be a  symmetric Segal algebra on $G$. Then the followings are equivallent
\begin{enumerate}
\item [(i)] $G$ is amenable,
\item [(ii)] $S(G)$ is $\phi$-biflat,
\item [(iii)] $S(G)$ is left $\phi$-amenable.
\end{enumerate}
\end{Proposition}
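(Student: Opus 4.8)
The plan is to prove the three equivalences in a cycle (i)$\Rightarrow$(ii)$\Rightarrow$(iii)$\Rightarrow$(i), using the general machinery already established. The implications (ii)$\Rightarrow$(iii) and (iii)$\Rightarrow$(i) are essentially immediate: (ii)$\Rightarrow$(iii) is precisely Theorem \ref{bif} together with the fact that every Segal algebra has a left approximate identity, and (iii)$\Rightarrow$(i) is \cite[Corollary 3.4]{alagh} (as already used in Corollary \ref{bi}), since left $\phi$-amenability of $S(G)$ for some character forces $G$ to be amenable. So the real content is the single new implication (i)$\Rightarrow$(ii): if $G$ is amenable, then the symmetric Segal algebra $S(G)$ is $\phi$-biflat for every $\phi\in\Delta(S(G))$.

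For (i)$\Rightarrow$(ii), I would first use that $\Delta(S(G)) = \{\phi|_{S(G)} : \phi\in\Delta(L^1(G))\}$, so fix $\phi\in\Delta(L^1(G))$ and let $\rho_G : G\to\mathbb{C}$ be the corresponding (bounded, continuous) group character. Amenability of $G$ gives, by \cite{sah1}, that $L^1(G)$ is $\phi$-biflat; equivalently (via Theorem \ref{bif} and its converse direction, or directly from the known characterization) $L^1(G)$ is left $\phi$-amenable, so there is $M\in L^1(G)^{**}$ with $a\ast M = \phi(a)M$ and $\tilde\phi(M)=1$. The idea is to manufacture a bounded $S(G)$-bimodule map $\rho : S(G)\to (S(G)\otimes_p S(G))^{**}$ splitting $\tilde\phi\circ\pi^{**}$ after composing with $\phi$. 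The natural candidate is $\rho(f) = f\otimes M$ interpreted appropriately: since $S(G)$ is a symmetric Segal algebra, it is a two-sided ideal of $L^1(G)$ with $\|g\ast f\|_S, \|f\ast g\|_S \le \|g\|_{L^1}\|f\|_S$, so the maps $f\mapsto f\ast e$ for $e\in L^1(G)$ map $S(G)$ boundedly into $S(G)$ with uniform control, and this lets the $L^1(G)$-level element $M$ act on the $S(G)$-factor. Concretely, I would take a bounded approximate identity $(e_\beta)$ in $L^1(G)$ and define $\rho_\beta(f) = f\otimes (e_\beta \ast h)$ for a suitable fixed $h\in S(G)$ with $\phi(h)=1$, then pass to a $w^*$-cluster point in $(S(G)\otimes_p S(G))^{**}$ using the net indexed by $L^1(G)$-scale data to land the ``$M$-like'' element in the second factor; the bimodule property on the right is automatic, and on the left one uses $a\ast f\otimes(\cdot) = (a\ast f)\otimes(\cdot)$ together with $\phi(a\ast f) = \phi(a)\phi(f)$ after applying $\tilde\phi\circ\pi^{**}$.

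The hard part will be making the right module action compatible: an element of $(S(G)\otimes_p S(G))^{**}$ obtained from $M\in L^1(G)^{**}$ need not be a genuine right multiplier for $S(G)$, and $M$ itself satisfies $a\ast M = \phi(a)M$ only on the left. I expect the cleanest route is to avoid literally using $M$ and instead imitate the proof of Theorem \ref{bif} in reverse: since $G$ is amenable, \cite[Corollary 3.4]{alagh} (or \cite[Theorem 2.1]{nas}) gives that $S(G)$ is left $\phi$-amenable, so there is $m\in S(G)^{**}$ with $a\ast m = \phi(a)m$, $\tilde\phi(m)=1$; then set $\rho(f) = $ the image of $f\otimes m$ under the canonical map $S(G)\otimes_p S(G)^{**}\to (S(G)\otimes_p S(G))^{**}$, where symmetry of $S(G)$ ensures $(S(G))^{**}$ acts on $S(G)$ on the right in a way compatible with the bimodule structure (this is where ideal-ness in $L^1(G)$ and the inequality $\|f\ast g\|_S\le\|g\|_{L^1}\|f\|_S$ are used). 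One checks $\rho$ is a bounded left $S(G)$-module map from $a\ast f\otimes m = (a\ast f)\otimes m$ and $f\otimes(m\ast a) = \phi(?)$—here the right-handed computation needs the symmetric structure so that $f\otimes m$ transforms correctly—and then $\tilde\phi\circ\pi^{**}_{S(G)}\circ\rho(f) = \phi(f)\tilde\phi(m) = \phi(f)$, giving $\phi$-biflatness. The delicate bookkeeping is entirely in verifying the right $S(G)$-module morphism identity and the boundedness of the induced map into the bidual, for which the symmetric Segal hypothesis is exactly what is needed.
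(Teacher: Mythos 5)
Your treatment of (ii)$\Rightarrow$(iii) (via Theorem \ref{bif} and the left approximate identity of $S(G)$) and of (iii)$\Rightarrow$(i) (via \cite[Corollary 3.4]{alagh}) matches the paper and is fine. The gap is in (i)$\Rightarrow$(ii). Your candidate $\rho(f)=f\otimes m$, with $m\in S(G)^{**}$ a left $\phi$-invariant element ($a\ast m=\phi(a)m$, $\tilde\phi(m)=1$), cannot be a right $S(G)$-module morphism: one needs $(f\otimes m)\cdot a=f\otimes(m\ast a)$ to equal $\rho(f\ast a)=(f\ast a)\otimes m$, and left invariance of $m$ says nothing about $m\ast a$; even a two-sidedly invariant $m$ would only give $\phi(a)\,f\otimes m$, which is not $(f\ast a)\otimes m$. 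The symmetric Segal hypothesis does not repair this --- it only guarantees that $S(G)$ is a two-sided ideal of $L^{1}(G)$ with the norm estimate $\norm{g\ast f}_S\le\norm{f}_{L^1}\norm{g}_S$; it does not turn a one-sided invariant element of $S(G)^{**}$ into a bimodule-compatible object. So what you flag as ``delicate bookkeeping'' is in fact a structural obstruction: a splitting of the required form must come from an element $N\in(S(G)\otimes_p S(G))^{**}$ satisfying the two-sided condition $a\cdot N=N\cdot a$ together with $\tilde\phi\circ\pi^{**}_{S(G)}(N)=1$, after which $\rho(a)=a\cdot N$ is a bimodule map; a simple elementary tensor with one leg in $S(G)^{**}$ is the wrong shape.

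The paper produces such an $N$ by using the full amenability of $L^{1}(G)$ (not merely left $\phi$-amenability): it takes a bounded approximate diagonal $(m_\alpha)$ in $L^{1}(G)\otimes_p L^{1}(G)$, compresses it with the maps $R(a\otimes b)=\phi(b)a$ and $L(a\otimes b)=\phi(a)b$ to get nets $R(m_\alpha)$, $L(m_\alpha)$ in $L^{1}(G)$ that are respectively asymptotically right- and left-$\phi$-invariant, and then forms $n_\alpha=R(m_\alpha)\ast i_0\otimes i_0\ast L(m_\alpha)$ with $i_0\in S(G)$, $\phi(i_0)=1$. The symmetry of $S(G)$ is used precisely here, to ensure $i_0\ast L(m_\alpha)\in S(G)$ with bounded $S$-norm (the left-ideal property alone handles $R(m_\alpha)\ast i_0$). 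The first tensor leg absorbs left multiplication into a scalar and the second leg absorbs right multiplication, so $a\cdot n_\alpha-n_\alpha\cdot a\to 0$ while $\phi\circ\pi(n_\alpha)\to 1$; a $w^*$-cluster point $N$ then has exactly the two-sided centrality needed. To fix your argument you would need to replace the element $m\in S(G)^{**}$ by this kind of asymptotically central net in $S(G)\otimes_p S(G)$, i.e.\ route the proof through the approximate diagonal of $L^{1}(G)$ rather than through left $\phi$-amenability.
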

\begin{proof}
(i)$\Rightarrow$(ii) Let $G$ be an amenable group. Then $L^{1}(G)$ is amenable. So there exists a bounded net $(m_{\alpha})$ in $L^{1}(G)\otimes_{p} L^{1}(G)$ such that $a\cdot m_{\alpha}-m_{\alpha}\cdot a\rightarrow 0$ and $\pi_{L^{1}(G)}(m_{\alpha})a\rightarrow a$ for every $a\in L^{1}(G).$ It is easy to see that   $\phi \circ\pi_{L^{1}(G)}(m_{\alpha})\rightarrow 1$ for every $\phi\in\Delta(L^{1}(G))$. Fix $\phi\in\Delta(L^{1}(G))$. Define a map $R:L^{1}(G)\otimes_{p}L^{1}(G)\rightarrow L^{1}(G)$  by $R(a\otimes b)=\phi(b)a$ and set $L:L^{1}(G)\otimes_{p}L^{1}(G)\rightarrow L^{1}(G)$ for a map which is specified by $L(a\otimes b)=\phi(a)b$ for every $a,b\in L^{1}(G).$ It is easy to see that $L$ and $R$ are bounded linear maps which satisfy
$$L(m\cdot a)=L(m)\ast a,\quad L(a\cdot m)=\phi(a)L(m)\quad(a\in L^{1}(G), m\in L^{1}(G)\otimes_{p}L^{1}(G))$$
and
 $$R(a\cdot m)=a\ast R(m)\quad R(m\cdot a)=\phi(a)R(m)\quad(a\in L^{1}(G), m\in L^{1}(G)\otimes_{p}L^{1}(G)).$$
Thus $$L(m_{\alpha})\ast a-\phi(a)L(m_{\alpha})=L(m_{\alpha}\cdot a-a\cdot m_{\alpha})\rightarrow 0,$$
similarly we have $a\ast R(m_{\alpha})-\phi(a)R(m_{\alpha})\rightarrow 0$ for every $a\in L^{1}(G)$.
Since $$\phi \circ L=\phi\circ R=\phi\circ\pi_{L^{1}(G)},$$ it is easy to see that
$$\phi \circ L(m_{\alpha})=\phi\circ R(m_{\alpha})=\phi\circ\pi_{L^{1}(G)}(m_{\alpha})\rightarrow 1.$$
Pick an element $i_{0}$ in $S(G)$ such that $\phi(i_{0})=1.$ Set $n_{\alpha}=R(m_{\alpha})i_{0}\otimes i_{0}L(m_{\alpha})$ for every $\alpha$. Since
$( L(m_{\alpha}))$ and $(R(m_{\alpha}))$ are bounded nets in $L^{1}(G)$ and since $S(G)$ is an ideal of $L^{1}(G)$, we see that $(n_{\alpha})$ is a bounded net in $S(G)\otimes_{p}S(G)$. Also
\begin{equation}\label{eqqq}
\begin{split}
||a\cdot n_{\alpha}-n_{\alpha}\cdot a||_{S\otimes_{p}S}&=||a\cdot n_{\alpha}-\phi(a)n_{\alpha}+\phi(a)n_{\alpha}-n_{\alpha}\cdot a||_{S\otimes_{p}S}\\
&=||a\cdot n_{\alpha}-\phi(a)n_{\alpha}||_{S\otimes_{p}S}+||\phi(a)n_{\alpha}-n_{\alpha}\cdot a||_{S\otimes_{p}S}\rightarrow 0\quad (a\in S(G))
\end{split}
\end{equation}
and
\begin{equation}\label{eqqq1}
\begin{split}
\phi\circ\pi_{S(G)}(n_{\alpha})=\phi (R(m_{\alpha})\ast i_{0}^{2}\ast L(m_{\alpha}))=\phi(R(m_{\alpha}))\phi(L(m_{\alpha}))\rightarrow 1.
\end{split}
\end{equation}
Let $N$ be a $w^{*}$-cluster point of $(n_{\alpha})$ in $(S(G)\otimes_{p}S(G))^{**}$. Combining  (\ref{eqqq}) and (\ref{eqqq1}) with the  facts $$a\cdot n_{\alpha}\xrightarrow{w^{*}}a\cdot N,\quad n_{\alpha}\cdot a\xrightarrow{w^{*}}N\cdot a,\quad \pi^{**}_{S(G)}(n_{\alpha})\xrightarrow{w^{*}}\pi^{**}_{S(G)}(N)\quad (a\in(S(G))$$ we have
$$a\cdot N=N\cdot a,\quad \tilde{\phi}\circ\pi^{**}_{S(G)}(N)=1\quad (a\in(S(G))).$$
Define a map $\rho:S(G)\rightarrow (S(G)\otimes_{p}S(G))^{**}$ by $\rho(a)=a\cdot N$ for every $a\in S(G).$ It is easy to see that $\rho$ is a bounded $S(G)$-bimodule morphism and $\tilde{\phi}\circ\pi^{**}_{S(G)}\circ\rho(a)=\tilde{\phi}\circ\pi^{**}_{S(G)}(a\cdot N)=\phi(a)$, so $S(G)$ is $\phi$-biflat.\\
(ii)$\Rightarrow$(i)  is clear by Corollary \ref{bi}.\\
(iii)$\Leftrightarrow$(i) is clear by \cite[Corollary 3.4]{alagh}.
\end{proof}
Let $A$ be a Banach algebra and $\phi\in\Delta(A)$.  $A$ is called
$\phi$-inner amenable if there exists an element $m\in A^{**}$ such that $m(f\cdot a)=m(a\cdot f)$ and $\tilde{\phi}(m)=1$ for every $a\in A$ and $f\in A^{*}$, see \cite{jab}. Note that by \cite[Corollay 2.2]{jab} every Banach algebra with a bounded approximate identity is $\phi$-inner amenable.
\begin{Theorem}\label{dual}
Let $A$ be a $\phi$-inner amenable Banach algebra, where
$\phi\in\Delta(A)$. If $A^{**}$ is  $\tilde{\phi}$-biflat, then $A$ is left $\phi$-amenable.
\end{Theorem}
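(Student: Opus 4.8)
The plan is to transport the $\phi$-inner amenability element of $A$ through the $\tilde{\phi}$-biflatness map of $A^{**}$, to collapse the second tensor leg by means of $\tilde{\phi}$, and then to push the outcome down from $A^{****}$ to $A^{**}$ via the canonical projection. First I would fix, from $\phi$-inner amenability, an element $m\in A^{**}$ with $a\cdot m=m\cdot a$ for every $a\in A$ and $\tilde{\phi}(m)=1$, together with a bounded $A^{**}$-bimodule morphism $\rho:A^{**}\to(A^{**}\otimes_{p}A^{**})^{**}$ satisfying $\widetilde{\tilde{\phi}}\circ\pi^{**}_{A^{**}}\circ\rho=\tilde{\phi}$, coming from the $\tilde{\phi}$-biflatness of $A^{**}$. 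Regarding $A$ as a closed subalgebra of $A^{**}$ equipped with the first Arens product, the bimodule identities for $\rho$ restrict to $A$, and so the relation $a\cdot m=m\cdot a$ yields
$$a\cdot\rho(m)=\rho(a\cdot m)=\rho(m\cdot a)=\rho(m)\cdot a\qquad(a\in A).$$

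Next I would introduce $\theta=\lambda\circ(id_{A^{**}}\otimes\tilde{\phi}):A^{**}\otimes_{p}A^{**}\to A^{**}$, where $\lambda:A^{**}\otimes_{p}\mathbb{C}\to A^{**}$ is given by $\lambda(F\otimes z)=zF$, so that $\theta(F\otimes G)=\tilde{\phi}(G)F$. A routine verification shows that $\theta$ is a bounded left $A$-module morphism with $\theta(\xi\cdot a)=\phi(a)\theta(\xi)$ for $a\in A$, and that $\tilde{\phi}\circ\theta=\tilde{\phi}\circ\pi_{A^{**}}$ (both coincide with $\tilde{\phi}\otimes\tilde{\phi}$ on elementary tensors). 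Passing to second adjoints, $\theta^{**}$ remains left $A$-linear with $\theta^{**}(\Xi\cdot a)=\phi(a)\theta^{**}(\Xi)$, and $\widetilde{\tilde{\phi}}\circ\theta^{**}=\widetilde{\tilde{\phi}}\circ\pi^{**}_{A^{**}}$ on $(A^{**}\otimes_{p}A^{**})^{**}$. Writing $P:A^{****}\to A^{**}$ for the canonical projection (the adjoint of the canonical embedding $A^{*}\hookrightarrow A^{***}$), one checks that $P$ is a left $A$-module morphism and that $\tilde{\phi}\circ P=\widetilde{\tilde{\phi}}$ on $A^{****}$. I then set $n=P\bigl(\theta^{**}(\rho(m))\bigr)\in A^{**}$.

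Finally I would verify that $n$ witnesses left $\phi$-amenability of $A$. Applying the left $A$-module morphism $P\circ\theta^{**}$ to both sides of the displayed identity and using $\theta^{**}(\rho(m)\cdot a)=\phi(a)\theta^{**}(\rho(m))$ gives $an=a\cdot n=(P\circ\theta^{**})(\rho(m)\cdot a)=\phi(a)\,P(\theta^{**}(\rho(m)))=\phi(a)n$ for every $a\in A$. For the normalisation,
$$\tilde{\phi}(n)=\widetilde{\tilde{\phi}}\bigl(\theta^{**}(\rho(m))\bigr)=\widetilde{\tilde{\phi}}\bigl(\pi^{**}_{A^{**}}(\rho(m))\bigr)=\tilde{\phi}(m)=1,$$
using in turn $\tilde{\phi}\circ P=\widetilde{\tilde{\phi}}$, then $\widetilde{\tilde{\phi}}\circ\theta^{**}=\widetilde{\tilde{\phi}}\circ\pi^{**}_{A^{**}}$, and then the defining property of the $\tilde{\phi}$-biflatness of $A^{**}$. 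Thus $an=\phi(a)n$ and $\tilde{\phi}(n)=1$ for all $a\in A$, that is, $A$ is left $\phi$-amenable.

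I expect the main difficulty to lie in the bookkeeping with iterated duals: checking that the first Arens action of $a\in A\subseteq A^{**}$ on $A^{**}$ and on $A^{**}\otimes_{p}A^{**}$ agrees with the intrinsic $A$-module actions — so that the $A^{**}$-bimodule identities for $\rho$ genuinely restrict to the $A$-bimodule identities used above — that the second adjoint of $\theta$ retains both its left $A$-linearity and the twisted right-module identity $\theta^{**}(\Xi\cdot a)=\phi(a)\theta^{**}(\Xi)$, and, most delicately, that the canonical projection $P:A^{****}\to A^{**}$ is a left $A$-module morphism with $\widetilde{\tilde{\phi}}=\tilde{\phi}\circ P$, which is exactly what allows the $\tilde{\phi}$-biflatness identity to survive the descent from $A^{****}$ back to $A^{**}$.
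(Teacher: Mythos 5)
Your argument is correct, and it reaches the conclusion by a genuinely more direct route than the paper. Both proofs start identically: from $\phi$-inner amenability you take $m\in A^{**}$ with $a\cdot m=m\cdot a$ and $\tilde{\phi}(m)=1$, feed it into the biflatness morphism $\rho$, and observe that $M=\rho(m)$ satisfies $a\cdot M=M\cdot a$ and $\widetilde{\tilde{\phi}}\circ\pi^{**}_{A^{**}}(M)=1$. From that point the paper takes a long detour: it uses Goldstine's theorem together with a Mazur-type convexity argument to replace $M$ by a bounded net $(n_{(F,\epsilon)})$ in $A^{**}\otimes_{p}A^{**}$ that is asymptotically central with $\tilde{\phi}\circ\pi_{A^{**}}(n_{(F,\epsilon)})\to 1$, then invokes the Ghahramani--Loy--Willis lemma to transport this net into $(A\otimes_{p}A)^{**}$, repeats the approximation to land in $A\otimes_{p}A$, collapses with $T(a\otimes b)=\phi(b)a$, and finally cites the Kaniuth--Lau--Pym net characterization of left $\phi$-amenability. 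You instead collapse immediately at the level of $A^{**}\otimes_{p}A^{**}$ via $\theta(F\otimes G)=\tilde{\phi}(G)F$, pass to second adjoints, and descend from $A^{****}$ to $A^{**}$ with the canonical projection $P$, producing the witnessing element $n=P(\theta^{**}(\rho(m)))$ outright; the identities $\tilde{\phi}\circ P=\widetilde{\tilde{\phi}}$, $\widetilde{\tilde{\phi}}\circ\theta^{**}=\widetilde{\tilde{\phi}}\circ\pi^{**}_{A^{**}}$, and $\theta^{**}(\Xi\cdot a)=\phi(a)\theta^{**}(\Xi)$ that you flag as the delicate bookkeeping are all routine adjoint computations and do hold (in particular $P=\iota_{A^{*}}^{*}$ is an $A$-bimodule map because canonical embeddings are, and $\hat{\phi}=\tilde{\phi}$ in $A^{***}$). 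What your approach buys is brevity and the avoidance of two external ingredients (the GLW lemma and the KLP net criterion), at the cost of working three duals up; the paper's approach keeps everything at the level of approximating nets in $A\otimes_{p}A$, which is closer in spirit to how left $\phi$-amenability is usually verified but is substantially longer.
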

\begin{proof}
Let $A^{**}$ be $\tilde{\phi}$-biflat. Then there
exists a bounded $A^{**}$-bimodule morphism $\rho:A^{**}\rightarrow
(A^{**}\otimes_{p}A^{**})^{**}$  such that for every $a\in A^{**}$
$$\tilde{\tilde{\phi}}\circ\pi^{**}_{A^{**}}\circ\rho(a)=\tilde{\phi}(a),$$
where $\tilde{\tilde{\phi}}$ is an extension of $\tilde{\phi}$ on ${A^{****}}$ as we mentioned in the introduction.
Suppose that  $A$ is  $\phi$-inner amenable. Then  there exists an element $m\in A^{**}$ such that $m(f\cdot a)=m(a\cdot f)$ and $\tilde{\phi}(m)=1$ for every $a\in A$ and $f\in A^{*}$. Set $M=\rho(m)$, since $\rho$ is a bounded $A^{**}$-bimodule morphism,
we have $a\cdot M=M\cdot a$ and  $\tilde{\tilde{\phi}}\circ\pi^{**}_{A^{**}}(M)=\tilde{\phi}(m)=1$ for every $a\in A.$

Now take $\epsilon>0$ and a finite set $F=\{a_{1},...,a_{r}\}\subseteq A$, and set
\begin{equation*}
\begin{split}
V=&\{(a_{1}\cdot n-n\cdot a_{1},..., a_{r}\cdot n-n\cdot a_{r},\tilde{\phi}\circ\pi_{A^{**}}(n)-1)
:n\in A^{**}\otimes_{p}A^{**}, ||n||\leq ||M||\}\\
&\subseteq \prod^{r}_{i=1}(A^{**}\otimes_{p}A^{**})\oplus_{1} \mathbb{C}.
\end{split}
\end{equation*}
 Then $V$ is a convex set and so the weak and the norm closures of $V$ coincide. But by Goldestine's theorem there exists a  net
$(n_{\alpha})\subseteq A^{**}\otimes_{p}A^{**}$ such that $n_{\alpha}\xrightarrow{w^{*}}M$ and $||n_{\alpha}||\leq ||M||$. So for every $a\in F$ we have $a\cdot n_{\alpha}-n_\alpha\cdot a\xrightarrow{w} 0$ and $|\tilde{\phi}\circ\pi_{A^{**}}(n_{\alpha})-1|\rightarrow 0$ which shows that $(0,0,....,0)$ is a $||\cdot||$-cluster point of $V$.
 Thus there
exists an element  $n_{(F,\epsilon)}$ in $A^{**}\otimes_{p}A^{**}$
such that
\begin{equation}\label{e-8}
||a_{i}\cdot n_{(F,\epsilon)}-n_{(F,\epsilon)}\cdot a_{i}||<\epsilon,\quad |\tilde{\phi}\circ\pi_{A^{**}}(n_{(F,\epsilon)})-1|<\epsilon
\end{equation}
for every $i\in\set{1,2,\ldots,r}$.
Now we consider a directed set
$$\Delta=\{(F,\epsilon):F {\hbox{ is a finite subset of }}A, \epsilon>0\},$$ with the following order
$$(F,\epsilon)\leq (F^{\prime},\epsilon^{\prime})\Longrightarrow F\subseteq F^{\prime},\quad \epsilon\geq \epsilon^{\prime}.$$
So the equation  (\ref{e-8}) follows that there exists   a bounded  net  $(n_{(F,\epsilon)})_{(F,\epsilon)\in \Delta}$  in $A^{**}\otimes_{p}A^{**}$ such that
$$a\cdot n_{(F,\epsilon)}-n_{(F,\epsilon)} \cdot a\rightarrow 0,\quad \tilde{\phi}\circ\pi_{A^{**}}(n_{(F,\epsilon)})\rightarrow 1$$
for every $a\in A$. By \cite[Lemma 1.7]{ghah} there exists a bounded
linear map $\psi:A^{**}\otimes_{p} A^{**}\rightarrow (A\otimes_{p}
A)^{**}$ such that for $a,b\in A$ and $m\in A^{**}\otimes_{p}
A^{**}$, the following holds
\begin{enumerate}
\item [(i)] $\psi(a\otimes b)=a\otimes b $,
\item [(ii)] $\psi(m)\cdot a=\psi(m\cdot a)$,\qquad
$a\cdot\psi(m)=\psi(a\cdot m),$
\item [(iii)] $\pi_{A}^{**}(\psi(m))=\pi_{A^{**}}(m).$
\end{enumerate}
 Define $\xi_{(F,\epsilon)}=\psi(n_{(F,\epsilon)})$ which is a net in $(A\otimes_{p}A)^{**}$ and by the previous properties of $\psi$ it satisfies
$$a\cdot \xi_{(F,\epsilon)}-\xi_{(F,\epsilon)} \cdot a\rightarrow 0,\quad \tilde{\phi}\circ\pi^{**}_{A}(\xi_{(F,\epsilon)})\rightarrow 1\quad (a\in A).$$
Now by applying a similar method as we obtained a net from $M$ at
the beginning of the proof, one can obtain a bounded net
$(\gamma_{(F,\epsilon)})_{(F,\epsilon)\in \Delta}$ related to
$\xi_{(F,\epsilon)}$ in $A\otimes_{p}A$ such that
$$a\cdot \gamma_{(F,\epsilon)}-\gamma_{(F,\epsilon)} \cdot a\rightarrow 0,\quad \phi\circ\pi_{A}(\gamma_{(F,\epsilon)})\rightarrow 1\quad (a\in A).$$
Now define
$T:A\otimes_{p}A\rightarrow A$ by $T(a\otimes b)=\phi(b)a$
for every $a$ and $b$ in $A$. It is easy to see that $T$ is a bounded linear map with the following properties $$T(a\cdot
m)=a T(m),\quad T(m\cdot a)=\phi(a)T(m)\qquad (m\in
A\otimes_{p}A,\quad a\in A).$$
Define $\nu_{(F,\epsilon)}=T( \gamma_{(F,\epsilon)})$, it is easy to see that $\nu_{(F,\epsilon)}$ is a bounded net and
$$a\nu_{(F,\epsilon)}-\phi(a)\nu_{(F,\epsilon)}\rightarrow 0,\quad \phi\circ T(\nu_{(F,\epsilon)})=\phi\circ\pi_{A}(\gamma_{(F,\epsilon)})\rightarrow 1\quad (a\in A).$$
Therefore by \cite[Theorem 1.4]{kan} $A$ is left $\phi$-amenable.
\end{proof}
\begin{cor}
Let $G$ be a locally compact group. If $L^{1}(G)^{**}$ is $\tilde{\phi}$-biflat, then $G$ is amenable.
\end{cor}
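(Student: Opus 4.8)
The plan is to obtain the statement as a direct consequence of Theorem~\ref{dual}, applied with $A=L^{1}(G)$, followed by the standard translation of left $\phi$-amenability of $L^{1}(G)$ into amenability of $G$.

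First I would record that $L^{1}(G)$ always has a bounded approximate identity, so by \cite[Corollary 2.2]{jab} it is $\phi$-inner amenable for every $\phi\in\Delta(L^{1}(G))$; in particular this holds for the augmentation character $\phi_{0}$ (and, more generally, for whatever character $\phi$ is under consideration, with $\tilde\phi$ its canonical extension to $L^{1}(G)^{**}$). Thus both hypotheses of Theorem~\ref{dual} are in force: $A=L^{1}(G)$ is $\phi$-inner amenable, and by assumption $A^{**}=L^{1}(G)^{**}$ is $\tilde{\phi}$-biflat. Theorem~\ref{dual} then yields that $L^{1}(G)$ is left $\phi$-amenable.

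It then remains to pass from left $\phi$-amenability of $L^{1}(G)$ to amenability of $G$. Since $L^{1}(G)$ is itself a (trivial) Segal algebra, taking $\phi=\phi_{0}$ and invoking \cite[Corollary 3.4]{alagh} (equivalently, the classical characterization recorded in \cite{kan}, which is essentially Johnson's theorem: a bounded net $m_{\alpha}\in L^{1}(G)^{**}$ with $a\cdot m_{\alpha}=\phi_{0}(a)m_{\alpha}$ and $\tilde{\phi}_{0}(m_{\alpha})=1$ produces a left invariant mean on $L^{\infty}(G)$) gives that $G$ is amenable. This completes the proof.

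The argument is essentially a one-line invocation of Theorem~\ref{dual}, so there is no genuine obstacle; the only points requiring a little care are the verification that $L^{1}(G)$ is $\phi$-inner amenable, which is immediate from the bounded approximate identity, and keeping the tower of character extensions $\phi\mapsto\tilde{\phi}\mapsto\tilde{\tilde{\phi}}$ straight when quoting Theorem~\ref{dual} with $A=L^{1}(G)$ and $A^{**}=L^{1}(G)^{**}$.
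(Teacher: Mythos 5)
Your argument is exactly the paper's: note that $L^{1}(G)$ has a bounded approximate identity and is therefore $\phi$-inner amenable, apply Theorem~\ref{dual} to get left $\phi$-amenability of $L^{1}(G)$, and conclude amenability of $G$ via \cite[Corollary 3.4]{alagh}. No gaps; this is correct and matches the paper's proof step for step.
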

\begin{proof}
 Since  $L^{1}(G)$ has a bounded approximate identity,  $L^{1}(G)$ is $\phi$-inner amenable.
Thus by Theorem \ref{dual}, $L^{1}(G)$ is left $\phi$-amenable.
Now by  \cite[Corollary 3.4]{alagh} $G$ is amenable.
\end{proof}
\begin{cor}
Let $G$ be a locally compact group and
$\phi,\psi\in\Delta(L^{1}(G))$. If
$(M^{1}(G)\otimes_{p}L^{1}(G))^{**}$ is
$\widetilde{\phi\otimes\psi}$-biflat, then $G$ is amenable.
\end{cor}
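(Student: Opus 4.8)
The plan is to reduce this corollary to Theorem~\ref{dual} applied to the Banach algebra $A = M^{1}(G)\otimes_{p}L^{1}(G)$ together with the character $\phi\otimes\psi \in \Delta(A)$, so that $A^{**} = (M^{1}(G)\otimes_{p}L^{1}(G))^{**}$ and $\widetilde{\phi\otimes\psi}$ plays the role of $\tilde{\phi}$ in that theorem. The only hypothesis of Theorem~\ref{dual} that needs checking is that $A$ is $(\phi\otimes\psi)$-inner amenable. For this, I would invoke the fact, recorded just before Theorem~\ref{dual}, that every Banach algebra with a bounded approximate identity is $\phi$-inner amenable for every $\phi$ in its character space \cite[Corollary 2.2]{jab}; thus it suffices to observe that $M^{1}(G)\otimes_{p}L^{1}(G)$ has a bounded approximate identity. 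Here $M^{1}(G)$ should be read as the measure algebra $M(G)$ (which is unital, with identity $\delta_{e}$), so if $(e_{\alpha})$ is a bounded approximate identity for $L^{1}(G)$, then $(\delta_{e}\otimes e_{\alpha})$ is a bounded approximate identity for the projective tensor product — exactly the argument already used in the proof of Proposition~\ref{prop1}.

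Having verified the hypothesis, Theorem~\ref{dual} yields that $A = M^{1}(G)\otimes_{p}L^{1}(G)$ is left $(\phi\otimes\psi)$-amenable. The next step is to transfer this to $L^{1}(G)$: since $A$ is (a tensor product containing) $L^{1}(G)$ as a tensor factor, and $M^{1}(G)$ is unital with $\phi(\delta_{e})=1$, I would apply \cite[Theorem~3.14]{nas} — the same splitting lemma used repeatedly in Section~2 — to conclude that left $(\phi\otimes\psi)$-amenability of $M^{1}(G)\otimes_{p}L^{1}(G)$ forces left $\psi$-amenability of $L^{1}(G)$. Finally, by \cite[Corollary~3.4]{alagh}, left $\psi$-amenability of $L^{1}(G)$ for a character $\psi$ implies that $G$ is amenable, which is the desired conclusion.

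The main obstacle, such as it is, is bookkeeping rather than mathematics: one must make sure that Theorem~\ref{dual} is being applied to $A=M^{1}(G)\otimes_{p}L^{1}(G)$ itself and not to a tensor factor, so that the object $A^{**}$ in the theorem genuinely matches $(M^{1}(G)\otimes_{p}L^{1}(G))^{**}$ in the statement, and that the character used is the full tensor character $\phi\otimes\psi$, whose canonical extension to $A^{**}$ is $\widetilde{\phi\otimes\psi}$. One should also confirm that \cite[Theorem~3.14]{nas} applies in the form needed, i.e. that left character amenability of a projective tensor product descends to each factor, which is precisely how it is cited in the proofs of Proposition~\ref{prop1} and the subsequent propositions. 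Apart from identifying $M^{1}(G)$ with $M(G)$, every ingredient is an off-the-shelf application of results already established above, so the proof is short.
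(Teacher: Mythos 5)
Your proposal follows essentially the same route as the paper: bounded approximate identity of $M(G)\otimes_{p}L^{1}(G)$ gives $(\phi\otimes\psi)$-inner amenability, Theorem \ref{dual} gives left $(\phi\otimes\psi)$-amenability, and this descends to $L^{1}(G)$ and hence to amenability of $G$. The only discrepancy is the citation for the descent step: the paper uses \cite[Theorem 3.3]{kan}, which handles left character \emph{amenability} of tensor products, whereas \cite[Theorem 3.14]{nas} as used elsewhere in the paper concerns left character \emph{contractibility}; the correct reference here is the former, but the underlying fact you invoke is the right one.
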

\begin{proof}
 We note that  $M(G)\otimes_{p}L^{1}(G)$ has a bounded
 approximate identity and so it is $\phi$-inner amenable. Now by Theorem \ref{dual},
$M(G)\otimes_{p}L^{1}(G)$ is left $\phi\otimes \psi$-amenable, where
$\phi,\psi\in\Delta(L^{1}(G))$. Hence by \cite[Theorem 3.3]{kan},
$L^{1}(G)$ is left $\phi$-amenable, hence  $G$ is amenable.
\end{proof}

%------------------------------------------------------------------------------------------------------------------------------------------
\begin{small}

\end{small}

\end{document}